	\newtheorem{thm}{Theorem}
  	\newtheorem{cor}{Corollary}
  	\newtheorem{lem}{Lemma}
  	\newtheorem{prop}{Proposition}
  	\newtheorem{prob}{Problem}	
	\theoremstyle{definition}
  	\newtheorem{df}{Definition}
	\theoremstyle{remark}
  	\newtheorem{rem}{Remark}
\def\PP{{\textbf P}}
\def\OO{\mathcal{O}}
\def\W{\mathcal{W}}
\def\cM{\mathcal{M}}
\def\cC{\mathcal{C}}
\def\Pic0{{\rm Pic}^0(X)}
\def\ww{\overline{\mathcal{W}}}
\def\mm{\overline{\mathcal{M}}}
\def\cc{\overline{\mathcal{C}}}
\begin{document}

\title{Du Val curves and the pointed Brill-Noether Theorem}

\author[G. Farkas]{Gavril Farkas}

\address{Humboldt-Universit\"at zu Berlin \ Institut f\"ur Mathematik
\hfill \newline\texttt{}
 \indent  Unter den Linden 6, 10099 Berlin, Germany} \email{{\tt farkas@math.hu-berlin.de}}
\thanks{}

\author[N. Tarasca]{Nicola Tarasca}
\address{University of Utah \ Department of Mathematics  \hfill
 \newline \indent Salt Lake City,  UT 84112, USA }
 \email{{\tt
tarasca@math.utah.edu}}

\subjclass[2010]{14H99 (primary), 14J26 (secondary)}
\keywords{Brill-Noether general smooth pointed curves, Du Val curves, rational and ruled surfaces}

\begin{abstract} We show that a general curve in an explicit class of what we call Du Val pointed curves  satisfies the Brill-Noether Theorem for pointed curves.
Furthermore, we prove that a generic pencil of Du Val pointed curves is disjoint from all Brill-Noether divisors on the universal curve. This provides explicit examples of smooth pointed curves of arbitrary genus defined over $\mathbb Q$ which are Brill-Noether general. A similar result is proved for $2$-pointed curves as well using explicit curves on elliptic ruled surfaces.
\end{abstract}

\maketitle

The pointed Brill-Noether Theorem concerns the study of linear series on a general pointed algebraic curve $[C,p]$ with prescribed ramification at the marked point $p$.
Recall that for a point $p\in C$ and a linear series $\ell=(L, V)\in G^r_d(C)$, one denotes by $$\alpha^{\ell}(p):0\leq \alpha_0^{\ell}(p)\leq \ldots \leq \alpha_r^{\ell}(p)\leq d-r$$
the \emph{ramification} sequence of $\ell$ at $p$. One says that $p\in C$ is a ramification point of $\ell$ if $\alpha^{\ell}_r(p)>0$. For instance, the ramification points of the canonical linear series are precisely the Weierstrass points of $C$. The total number of ramification points of $\ell$, counted with appropriate multiplicities, is given by the \emph{Pl\"ucker formula}, see for instance \cite{MR846932} Proposition 1.1.  Fixing a Schubert index $\alpha:0\leq \alpha_0\leq \ldots \leq \alpha_r\leq d-r$, one can ask when a general pointed curve $[C,p]$ of genus $g$ carries a linear series $\ell\in G^r_d(C)$ with ramification sequence $\alpha^{\ell}(p)\geq \alpha$. The locus $G^r_d(C, p, \alpha)$ of linear series on $C$ satisfying this condition is a generalized determinantal variety of expected dimension
$$
\rho(g, r, d, \alpha):=\rho(g, r, d)-w(\alpha),
$$ where $\rho(g, r, d):=g-(r+1)(g-d+r)$ and $w(\alpha):=\alpha_0+\cdots+\alpha_r$ is the weight of $\alpha$.  It is proved in \cite{MR910206} Theorem 1.1 that for a general pointed curve $[C, p]\in \cM_{g, 1}$, each component of $G^r_d(C, p, \alpha)$, if nonempty, has dimension precisely $\rho(g, r, d, \alpha)$.
Moreover, \cite{MR910206} Proposition 1.2 establishes that $G^r_d(C,p, \alpha)\neq \emptyset$ if and only if
$$
\sum_{i=0}^r \mbox{max}\{\alpha_i+g-d+r,0\}\leq g.
$$
The proofs in \cite{MR910206} rely on limit linear series and degeneration to the boundary of the universal curve $\cc_g:=\mm_{g,1}$. Up to now, no  examples whatsoever of \emph{smooth pointed} curves $[C,p]\in \cC_{g}$ verifying the pointed Brill-Noether Theorem have been known. This situation contrasts the classical Brill-Noether Theorem; even though the original proof in \cite{MR563378} used degeneration to nodal curves, soon afterwards, in his well-known paper \cite{MR852158}, Lazarsfeld showed that sections of general polarized $K3$ surfaces are Brill-Noether-Petri general. Since curves in the polarization class of a $K3$ surface have no obvious distinguished marked points, it is far from clear how to extend the results of \cite{MR852158} to the case of pointed curves. In  \cite{ABFS}, an \emph{explicit} specialization of Lazarsfeld's curves emerging from the paper  \cite{ABS-W} is worked out.  It is shown that suitably general singular plane curves of degree $3g$ having multiplicity $g$ at eight points in $\PP^2$ and multiplicity $g-1$ at a further ninth point verify the Brill-Noether-Petri Theorem. Such  curves, which belong to the closure in $\mm_g$ of the locus of curves lying on $K3$ surfaces,  are called \emph{Du~Val curves} of genus $g$.

\vskip 4pt

One aim of this paper is to show that the Du Val curves introduced in \cite{ABFS}  lead to Brill-Noether general smooth \emph{pointed} curves of any genus defined over $\mathbb Q$.
The essential observation is that, unlike curves on general $K3$ surfaces, Du Val curves have a distinguished marked point with respect to which a pointed Brill-Noether Theorem can be established.

\vskip 5pt

We begin by recalling the setting of \cite{ABFS}. Let $S'$ be the blow-up of $ \PP^2$ at nine points
 $p_1,\dots, p_{9}$ which are \emph{general} in the sense of \cite{ABFS} (see also Section \ref{sect1} for the precise definition).
Let $E_1,\dots, E_{9}$ be the exceptional curves on $S'$. We denote by
$
J'\in |-K_{S'}|
$ the unique smooth plane cubic passing through $p_1, \ldots, p_9$ and consider the linear system on $S'$
$$
L_g:=\bigl|3g\ell-gE_1-\cdots-gE_{8}-(g-1)E_{9}\bigr|,
$$
where $\ell\in \mbox{Pic}(S')$ is the proper transform of a line in $\PP^2$. The main result of \cite{ABFS} is that a general curve $C'\in L_g$ verifies the Brill-Noether-Petri Theorem. For each $g\geq 1$, the points $p_1, \ldots, p_9$ determine a $10$-th point $p_{10}=p_{10}^{(g)}$ which is the base point of $L_g$. In fact, $p_{10} \in C'\cdot J'$, for every $C'\in L_g$. The point $p_{10}$ is determined by the relation
\begin{equation}\label{p10}
p_{10}=p_{10}^{(g)}= -gp_1-\cdots-gp_8-(g-1)p_9\in J',
\end{equation}
with respect to the group law of the elliptic curve. Under the genericity assumptions on the points $p_1, \ldots, p_9$ we started with, the points $p_{10}^{(g)}$ are distinct from one another, as well as from $p_1, \ldots, p_9$, see also Proposition \ref{notorsion}. As in \cite{ABFS}, we set $S:=\mbox{Bl}_{p_{10}}(S)$ and, by slight abuse of notation, we denote by $E_1, \ldots, E_{10}$ the corresponding exceptional curves. If $C$ is the strict transform of $C'$, then $|\OO_S(C)|$ is a base point free linear system of curves of genus $g$ having a section induced by $E_{10}$ (note that $C\cdot E_{10}=1$).

\vskip 4pt

A \emph{pointed Du Val curve} is a smooth  pointed  curve $[C, p]\in \cc_{g}$, where $C\subset S$ is as above and $\{p\}=C\cdot E_{10}$.
Before stating our main results, we recall that for a linear system $\ell\in G^r_d(C)$ and points $p_1, \ldots, p_n\in C$, the \emph{pointed Brill-Noether number} is defined as $$\rho(\ell,p_1, \ldots, p_n):=\rho(g,r,d)-w\bigl(\alpha^{\ell}(p_1)\bigr)-\cdots-w\bigl(\alpha^{\ell}(p_n)\bigr).$$

\begin{thm}\label{main1}
A general pointed Du Val curve $[C,p]$ verifies the pointed Brill-Noether Theorem, that is, $\dim G^r_d(C,p, \alpha)=\rho(g,r, d,\alpha)$, when $G^r_d(C,p, \alpha)\neq \emptyset$. In particular, for every linear system $\ell$ on $C$, one has $\rho(\ell,p)\geq 0$.
\end{thm}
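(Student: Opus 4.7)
My strategy is by induction on $g$, combining a specialization within the Du Val family with Eisenbud--Harris limit linear series and upper semicontinuity. The base case $g=1$ is classical: a pointed elliptic curve automatically satisfies pointed Brill--Noether.

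\emph{Specialization.} A direct class computation on $S$ gives $[C] = [C_{g-1}] + [J]$, where $C_{g-1}$ is a general Du Val curve of genus $g-1$ (sitting on the same surface) and $J$ is the strict transform of the plane cubic. One checks that $C_{g-1}\cdot J = 1$, $C_{g-1}\cdot E_{10}=0$, and $J\cdot E_{10}=1$, so the reducible nodal curve $X_0 := C_{g-1}\cup_q J$ lies in the linear system $|\OO_S(C)|$, is of compact type, and the marked point $p$ specializes to the unique intersection $J\cap E_{10}$, which lies on the elliptic component. Since the Du Val family of pointed curves specializes inside $|\OO_S(C)|$ to $(X_0, p)$, upper semicontinuity of $\dim G^r_d(\cdot,\cdot,\alpha)$ in flat families of pointed curves reduces the problem to bounding $\dim G^r_d(X_0, p, \alpha)$ from above by $\rho(g, r, d, \alpha)$.

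\emph{Limit linear series.} A crude limit linear series on $X_0$ consists of aspects $\ell_Y$ on $C_{g-1}$ and $\ell_J$ on $J$ satisfying the Eisenbud--Harris compatibility $\alpha_i^{\ell_Y}(q) + \alpha_{r-i}^{\ell_J}(q) \ge d-r$ at the node $q$, together with the extra condition $\alpha^{\ell_J}(p) \ge \alpha$ at the marked point $p\in J$. Setting $\beta := \alpha^{\ell_Y}(q)$ and $\gamma := \alpha^{\ell_J}(q)$, the induction hypothesis applied to $(C_{g-1}, q)$ bounds the space of admissible $\ell_Y$ by $\rho(g-1, r, d, \beta)$, while pointed Brill--Noether on $J$ with the two marked points $q, p$ is elementary since $\operatorname{Pic}^d(J) \cong J$ and line bundles are controlled directly by the group law. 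The additivity of $\rho$ under attaching an elliptic tail, once one restricts to extremal Schubert indices saturating the node-matching inequalities, yields the desired upper bound and closes the induction.

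\emph{Main obstacle.} The principal technical hurdle is that the induction hypothesis naturally refers to Du Val curves $C_{g-1}$ with their distinguished marked point $p_{g-1} \in C_{g-1}\cap E_{10}^{(g-1)}$, whereas the induction step requires it for the pair $(C_{g-1}, q)$, with $q = C_{g-1}\cap J$. As $C_{g-1}$ varies in $L_{g-1}$, the point $q$ sweeps out a subvariety of $J$, and one must leverage this variation — via a monodromy or density argument, or by strengthening the induction hypothesis to cover marked points in a larger subfamily — to transfer the pointed Brill--Noether property from $p_{g-1}$ to a generic $q$. Controlling this drift of the marked point, together with the combinatorics of compatible Schubert indices at the node, is the delicate heart of the argument.
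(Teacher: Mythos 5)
Your overall strategy --- induction on $g$, specialization within $|\OO_S(C)|$ to the elliptic-tail curve $D\cup J$, and an Eisenbud--Harris dimension count on the two aspects --- is exactly the route the paper takes. But the issue you single out as the ``delicate heart of the argument'' is in fact a non-issue, and you have misdescribed the geometry. The restriction of $\OO_{S'}(D)$ to $J'$ is a \emph{fixed} line bundle of degree one, independent of $D\in\Lambda_{g-1}$, namely $\OO_{J'}\bigl(p_{10}^{(g-1)}\bigr)$ by the defining relation (\ref{p10}) applied in genus $g-1$. So the node $q=D\cap J'$ does not ``sweep out a subvariety of $J$'' as $D$ varies: it is always equal to $p_{10}^{(g-1)}$, which is precisely the distinguished marked point of the genus $g-1$ Du Val system $\Lambda_{g-1}$. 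Moreover, every curve of $\Lambda_{g-1}$ arises as the elliptic-tail component of some genus $g$ Du Val pencil, so the inductive hypothesis applies verbatim to $\bigl[D,p_{10}^{(g-1)}\bigr]$. No monodromy or density argument is needed; the point of the whole construction is that the node of the degeneration \emph{is} the marked point one level down.

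The second, and more serious, gap is your treatment of the elliptic aspect. You assert that the $2$-pointed Brill--Noether statement on $J'$ at the points $q=p_{10}^{(g-1)}$ and $p=p_{10}^{(g)}$ is ``elementary'' from the group law. It is false in general: a $2$-pointed elliptic curve $[E,x,y]$ is Brill--Noether general only when $\OO_E(x-y)$ is \emph{non-torsion} (otherwise one produces linear series with $a_i\cdot x+b_{r-i}\cdot y\equiv a_j\cdot x+b_{r-j}\cdot y$ for two distinct indices, violating the expected dimension count). The paper must therefore prove that $p_{10}^{(g)}-p_{10}^{(g-1)}$ is non-torsion in $\mathrm{Pic}^0(J')$; this difference equals $-(p_1+\cdots+p_9)$, and its non-torsion is exactly where the genericity hypothesis on the nine points enters (Proposition \ref{notorsion}). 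Your proposal never invokes the genericity of $p_1,\ldots,p_9$ at this step, so as written the bound $\dim G^r_d\bigl(J',(q,\beta^c),(p,\alpha)\bigr)=\rho(1,r,d,\beta^c,\alpha)$ is unjustified and the induction does not close.
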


Since the points $p_1, \ldots, p_9$ can be chosen to have rational coefficients, $p=p_{10}^{(g)}\in \PP^2(\mathbb Q)$ and then $[C,p]$ is also defined over $\mathbb Q$. Hence, paralleling \cite{ABFS} Corollary 1.3,  our Theorem \ref{main1} provides examples of Brill-Noether general pointed curves of arbitrary genus $g$ defined over $\mathbb Q$.

\vskip 4pt

If $\W_g$ denotes the locus of Weierstrass points in $\cC_g$ (known to be an irreducible divisor on the universal curve), by direct calculation we show that the image of the family $j:\PP^1\rightarrow \cc_g$ induced by a Lefschetz
pencil of Du Val curves on $S$ satisfies
$$
j(\PP^1)\cap \ww_g=\emptyset,
$$
that is, for \emph{every} pointed Du Val curve $[C, p]$, the marked point $p$ is not a Weierstrass point of $C$. As we point out in Corollary \ref{ptp},  this implies that $j(\PP^1)$ is disjoint from all pointed Brill-Noether divisors on $\cc_g$. We refer to Section \ref{sect1} for detailed background on pointed Brill-Noether divisors on $\cc_g$.

\vskip 5pt

\subsection{Brill-Noether general $2$-pointed curves on elliptic ruled surfaces.} The Brill-Noether problem can be formulated for $n$-pointed curves $[C,p_1, \ldots, p_n]$ and concerns the variety of linear series $\ell\in G^r_d(C)$ having prescribed ramification
$\alpha^{\ell}(p_i)\geq \alpha^i$ for $i=1, \ldots, n$, given in terms of fixed Schubert indices $\alpha^1, \ldots, \alpha^n$.
In Section \ref{sect2}, using \emph{decomposable} elliptic ruled surfaces, we exhibit for the first time examples of  smooth $2$-pointed curves of arbitrary genus verifying the $2$-pointed Brill-Noether Theorem. The construction is inspired by a very nice note of Treibich \cite{MR1218267}.

\vskip 4pt

We start with an elliptic curve $J$ and a non-torsion line bundle $\eta\in \mbox{Pic}^0(J)$. The decomposable ruled surface
$$
\phi:Y:=\PP(\OO_J\oplus \eta)\rightarrow J
$$
is endowed with two disjoint  sections $J_0$ and $J_1$ respectively. Pick a point $r\in J$ and denote by $f:=\phi^{-1}(r)$ the corresponding ruling of $Y$. We denote by $s=s^{(g)}\in J$ the point determined by the equation $\OO_J(s-r)=\eta^{\otimes g}$. The linear system $|gJ_0+f|$ consists of curves of genus $g$ and has two base points, namely
$$\{p\}:= \phi^{-1}(r)\cdot J_1 \ \mbox{ and } \ \{q\}:= \phi^{-1}(s)\cdot J_0,$$
respectively. We establish the following result:

\begin{thm}\label{main3}
The $2$-pointed curve $[C,p,q]\in \cM_{g,2}$, where $C \in |gJ_0+f|$ is a general element and $p$ and $q$ are as above, verifies the $2$-pointed Brill-Noether Theorem.
In particular, for every linear series $\ell\in G^r_d(C)$ the inequality $\rho(\ell, p,q)\geq 0$ holds.
\end{thm}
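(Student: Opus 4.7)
The plan is to adapt the Lazarsfeld-Mukai vector-bundle technique to the elliptic ruled surface $Y$, in the spirit of the $K3$ argument of \cite{ABFS} and the construction of Treibich \cite{MR1218267}. As a preliminary step, upper semicontinuity of $\dim G^r_d(C,p,q,\alpha,\beta)$ as $C$ varies in $|gJ_0+f|$ reduces the theorem to showing that, for every pair of Schubert indices $(\alpha,\beta)$ with $\rho(g,r,d,\alpha,\beta)<0$, one can exhibit a single smooth $C\in|gJ_0+f|$ carrying no linear series $\ell\in G^r_d(C)$ with $\alpha^\ell(p)\geq\alpha$ and $\alpha^\ell(q)\geq\beta$.

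Suppose for contradiction that some such $\ell=(L,V)\in G^r_d(C)$ exists for $C$ general. I would extend the evaluation map of $(L,V)$ from $C$ to $Y$: the kernel of $V\otimes\OO_Y\to\iota_*L$, twisted to absorb the ramification at $p\in J_1$ and $q\in J_0$, yields a rank-$(r+1)$ vector bundle $E$ on $Y$ with $c_1(E)=[C]$ and $c_2(E)$ determined by $d$, $r$, $w(\alpha)$ and $w(\beta)$. The geometric content is that the ramification data at $p$ and $q$ are encoded by the base-point sections $J_1$ and $J_0$ of $|gJ_0+f|$ passing through $p$ and $q$ respectively; the Schubert indices $\alpha$ and $\beta$ then correspond to subbundles sitting in $E$ as twists by $-J_0$ and $-J_1$.

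The core of the argument uses the ruled projection $\phi\colon Y\to J$. The splitting type of $E$ along a general ruling $f_t=\phi^{-1}(t)\cong\PP^1$ is numerically fixed, and a Harder-Narasimhan style decomposition of $\phi_*E$ and $R^1\phi_*E$ on $J$ produces cohomological estimates for $H^i(Y,E)$ in which twists of powers of $\eta$ play the central role. Because $\eta$ is non-torsion, for every $i\neq 0$ and every divisor $D$ on $J$ one has $h^0(J,\eta^{\otimes i}\otimes\OO_J(D))=\max\{0,\deg D\}$, which forbids exceptional jumps in cohomology. Combining these estimates with Riemann-Roch on $Y$ and using the defining relation $\OO_J(s-r)=\eta^{\otimes g}$ to balance the contributions at $r$ and $s$, one extracts an inequality relating $h^0(Y,E)$ to $r,d,g,w(\alpha),w(\beta)$ that is incompatible with $\rho(g,r,d,\alpha,\beta)<0$.

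The principal obstacle is controlling $\phi_*E$ with sufficient precision. On a $K3$ surface, Lazarsfeld's argument relies on $h^1(\OO)=0$ and triviality of the canonical class to ensure that the Lazarsfeld-Mukai bundle is simple; both vanishings fail on the ruled $Y$, so the required rigidity must instead come from the non-torsion hypothesis on $\eta$. Showing that no destabilizing subsheaf of $E$ evades the cohomological constraints imposed by twists of $\eta$ is the technical crux, and is precisely where the hypothesis that $\eta$ be non-torsion becomes essential — a torsion $\eta$ would allow exceptional jumps in $h^0(J,\eta^{\otimes i}\otimes\OO_J(D))$ that could support destabilizing subbundles corresponding to hypothetical linear series with $\rho(\ell,p,q)<0$.
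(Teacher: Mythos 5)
Your proposal takes a genuinely different route from the paper, but as written it is not a proof: the two steps that would carry all the mathematical weight are asserted rather than carried out. First, the construction of a Lazarsfeld--Mukai-type bundle $E$ on $Y$ that ``absorbs the ramification at $p$ and $q$'' is exactly the step that has never been made to work even on $K3$ surfaces --- the paper's introduction points out that curves on general $K3$ surfaces have no distinguished marked points and that it is far from clear how to extend Lazarsfeld's method to pointed Brill--Noether statements. Encoding two full Schubert indices $\alpha,\beta$ (not merely their weights) into Chern classes and subbundle data of $E$ is a substantial open-ended construction, and your description of it (``twists by $-J_0$ and $-J_1$'') does not specify it. Second, the ``core of the argument'' --- the Harder--Narasimhan analysis of $\phi_*E$ and the resulting inequality contradicting $\rho(g,r,d,\alpha,\beta)<0$ --- is stated as a conclusion with no computation behind it; you yourself identify the failure of simplicity of $E$ (no vanishing of $h^1(\OO_Y)$, nontrivial $K_Y$) as the crux and then replace it with the hope that non-torsionness of $\eta$ suffices. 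A further issue: reducing the theorem to non-existence of series with $\rho<0$ on a single smooth curve does not by itself yield the dimension statement $\dim G^r_d(C,(p,\alpha),(q,\beta))=\rho(g,r,d,\alpha,\beta)$ for all $(\alpha,\beta)$ with $\rho\geq 0$, since the marked points $p,q$ are fixed and you cannot impose auxiliary general ramification points to cut down an oversized component.

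The paper's actual argument is far more elementary and avoids vector bundles entirely. It degenerates $C$ inside the linear system $|gJ_0+f_r|$ to $D\cup J_0$ with $D\in|(g-1)J_0+f_r|$, so that the marked point $p$ lies on $D$ and $q^{(g)}$ lies on the elliptic curve $J_0$, and the node is $q^{(g-1)}=D\cap J_0$. By semicontinuity an oversized component of $G^r_d$ would force an oversized component $Z$ of the variety of limit linear series on $D\cup J_0$; by Eisenbud--Harris, $Z$ is birational to a product $G^r_d\bigl(D,(p,\alpha),(q^{(g-1)},\gamma)\bigr)\times G^r_d\bigl(J_0,(q^{(g-1)},\gamma^c),(q^{(g)},\beta)\bigr)$. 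The first factor is controlled by induction on $g$ (the degenerate fibre is again of the same type, one genus lower), and the second by the fact that $q^{(g)}-q^{(g-1)}=\phi^*(\eta)$ is non-torsion on $J_0$, which makes the two vanishing flags transverse and the $2$-pointed elliptic curve Brill--Noether general. Additivity of $\rho$ then gives $\dim Z=\rho(g,r,d,\alpha,\beta)$, a contradiction. Note that the non-torsion hypothesis on $\eta$ enters here in a completely concrete way --- transversality of Schubert conditions on $H^0(J_0,L)$ --- rather than through cohomological jump loci of $\eta^{\otimes i}$ as in your sketch. If you wish to salvage your approach, you would need to first prove the existence and the precise numerical invariants of the pointed Lazarsfeld--Mukai bundle, which is a separate project; the degeneration route is the one that actually closes.
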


\vskip 4pt

A Brill-Noether general $2$-pointed curve  supports a Brill-Noether general $1$-pointed curve obtained by dropping either marked point. In particular, both $1$-pointed curves $[C,p]$ and $[C,q]$ in the statement of Theorem \ref{main3} verify the $1$-pointed Brill-Noether Theorem as well. For details, we refer to Section \ref{sect2}. The proofs of both Theorems \ref{main1} and \ref{main3} are intimately related, and rely on a canonical degeneration within the corresponding linear system on the surface to a singular curve with an elliptic tail. This leads to an inductive argument in the genus, which ultimately proves the desired Brill-Noether type theorems.

\vskip 4pt

Arguably, for many applications, the curves constructed in Theorem \ref{main3} are the simplest known examples of Brill-Noether general smooth curves of arbitrary genus. They combine two desirable features: (i) The canonical elliptic tail degeneration in $|gJ_0+f|$ provides a system of Brill-Noether general curves of any genus on the surface $Y$, which invites inductive proofs and reduction to genus $1$ curves and Schubert calculus problems in the spirit of limit linear series, and (ii) The general curve in $|gJ_0+f|$ being smooth, one need not build-up the degeneration set-up typical for limit linear series applications. A vivid instance of their use  is the recent proof in \cite{FK1} of the Prym-Green Conjecture concerning the naturality of the resolution of a paracanonical curve $\varphi_{K_C\otimes \eta}:C\hookrightarrow \PP^{g-2}$, where $C$ is a general curve of odd genus and $\eta$ is an $\ell$-torsion line bundles on $C$. The conjecture is proven for odd $g$ and arbitrary $\ell$ using precisely the curves constructed in Theorem \ref{main3}. For a proof of the Prym-Green Conjecture  using special $K3$ surfaces instead --- but only in the range $\ell\geq \sqrt{\frac{g+2}{2}}$ --- see \cite{FK2}.

\vskip 3pt

Finally, we show in Theorem \ref{main2} that Brill-Noether general one-pointed smooth curves can be constructed also on indecomposable elliptic ruled surfaces.

\vskip 4pt

\noindent {\bf Acknowledgments:} Section \ref{sect1} of this paper  uses in an essential way the methods developed in \cite{ABFS}. The first author is grateful to Enrico Arbarello for interesting discussions related to this circle of ideas. The presentation of the paper clearly benefitted from the insightful remarks of two referees, whom we thank.

\section{Pointed Du Val curves and  Weierstrass points}\label{sect1}

We assume familiarity with the theory of limit linear series in the sense of \cite{MR846932}. We need a few facts concerning divisor classes on the universal curve $\cc_g:=\mm_{g,1}$. The rational Picard group $\mbox{Pic}(\cc_{g})$ is generated by the Hodge class $\lambda$, the relative cotangent class $\psi$, the boundary divisor class $\delta_{\mathrm{irr}}:=[\Delta_{\mathrm{irr}}]$ of irreducible pointed stable
curves of genus $g$ and by the classes $\delta_{i}:=[\Delta_i]$, where for each $i=1, \ldots, g-1$,  the boundary divisor $\Delta_{i}\subset \cc_g$ corresponds to a transverse union of two smooth curves of genus $i$ and $g-i$ respectively, meeting in one point,  the marked points lying on the genus $i$ component. If $\pi:\cc_g\rightarrow \mm_g$ is the morphism forgetting the marked point,   the boundary divisors on $\mm_g$ and those on $\cc_g$ are related by the following formulas:
\[
\pi^*(\delta_{\mathrm{irr}})=\delta_{\mathrm{irr}},  \mbox{ }    \pi^*(\delta_i)=\delta_i+\delta_{g-i}, \mbox{ for } 1\leq i < \frac{g}{2}, \mbox{ and } \pi^*(\delta_{\frac{g}{2}})= \delta_{\frac{g}{2}}, \mbox{ for $g$ even}.
\]

\vskip 3pt

If $\alpha:0\leq \alpha_0\leq \ldots \leq \alpha_r\leq d-r$ is a Schubert index of type $(r,d)$, we introduce the \emph{complementary} Schubert index  $\alpha^c:0\leq d-r-\alpha_r\leq \ldots \leq d-r-\alpha_0\leq d-r$. When $\alpha_i=0$ for $i=0, \ldots, r$, we say that $\alpha$ is the trivial Schubert index.  We recall the definition of \emph{pointed Brill-Noether divisors} on $\cc_g$. Fix integers $r,d\geq 1$ and a Schubert index $\alpha:0\leq \alpha_0\leq \ldots \leq \alpha_r\leq d-r$ such that
the expected dimension of the locus of linear series of type $\mathfrak g^r_d$ on a curve of genus $g$ with prescribed ramification $\alpha$ at a given point equals $-1$. In other words,
$$\rho(g,r,d, \alpha):=g-(r+1)(g-d+r)-w(\alpha)=-1.$$
Let $\cC_{g,d}^r(\alpha):=\bigl\{[C,p]\in \cC_g: G^r_d(C,p,\alpha)\neq \emptyset\bigr\}$ be the corresponding pointed Brill-Noether locus. For instance,
$$
\W_g:=\cC_{g,2g-2}^{g-1}(0,\ldots, 0,1)=\Bigl\{[C,p]\in \cC_g: H^0(C,\omega_C(-gp))\neq 0\Bigr\}
$$
is the divisor of Weierstrass points. Since $\W_g$ can be parametrized by the Hurwitz space of $g$-fold covers of $\PP^1$ having a point of total ramification, it follows from \cite{Arb} Theorem~2.5 that $\W_g$ is an irreducible divisor. If $\rho(g,r,d)=-1$, then $\cC_{g,d}^r(0, \ldots, 0)$ is the pull-back to $\cC_g$ of the Brill-Noether divisor $\cM_{g,d}^r$ consisting of curves carrying a $\mathfrak g^r_d$.

\vskip 3pt
Cukierman \cite{MR1016424} computed the class of the closure $\ww_g$ of the Weierstrass divisor in $\cc_g$:
\begin{equation}\label{wei}
[\ww_g]=-\lambda+{g+1\choose 2} \psi-\sum_{i=1}^{g-1} {g-i+1\choose 2}\delta_i\in \mbox{Pic}(\cc_g).
\end{equation}
We also recall \cite{MR910206} that the class of the pull-back to $\cc_g$ of the Brill-Noether divisors $\mm_{g,d}^r$ is given by the formula
\begin{equation}\label{bn1}
[\overline{\mathcal{C}}_{g,d}^r(0,\ldots, 0)]=c_{g,d,r}\cdot \mathcal{BN}_g,
\end{equation}
where $c_{g,d,r}\in \mathbb Q_{>0}$ and $$\mathcal{BN}_g:=(g+3)\lambda-\frac{g+1}{6}\delta_{\mathrm{irr}}-\sum_{i=1}^{g-1} i(g-i)\delta_i\in \mathrm{Pic}(\overline{\mathcal{C}}_g).$$

Remarkably, the pointed Brill-Noether divisors only span a $2$-dimensional cone in $\mbox{Pic}(\cc_g)$. It is shown in \cite{MR985853} Theorem 1.2 that $\cc_{g,d}^r(\alpha)$ is a proper subvariety of $\cc_g$, having a unique divisorial component. The class of this component, which we shall denote by $[\cc_{g,d}^r(\alpha)]\in CH^1(\cc_g)$, can be written as a linear combination
$$[\cc_{g,d}^r(\alpha)]=\mu\cdot [\ww_g]+\nu\cdot \mathcal{BN}_g,$$
for non-negative rational constants $\mu$ and $\nu$, which are determined in \cite{FT}.


\begin{df}
We say that a pointed curve $[C,p]\in \cC_g$ is \emph{Brill-Noether general}, if for every choice of integers $r,d$ and a corresponding Schubert index $\alpha$ of type $(r,d)$, we have
\[
\dim G^r_d(C,p, \alpha)  =\rho(g,r,d, \alpha)  \quad\quad \mbox{ or } \quad\quad G^r_d(C,p, \alpha) = \emptyset.
\]
In particular, for every linear series $\ell\in G^r_d(C)$, the inequality $\rho(\ell,p)\geq 0$ holds.
\end{df}

If $[C,p]$ is a Brill-Noether general pointed curve, by letting $\alpha$ be the trivial Schubert index, we obtain that $C$ is a Brill-Noether general (unpointed) curve.


\begin{lem}
\label{pointediv}
A pointed curve $[C,p]\in \cC_g$ carries no linear series $\ell$ with $\rho(\ell,p)<0$ if and only if it
does not belong to any locus
$\cC_{g,d}^r(\alpha)$, where $\rho(g,r,d, \alpha)=-1$.
\end{lem}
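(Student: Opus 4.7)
The plan is to handle the two implications separately, with the backward direction being essentially tautological and the forward direction reduced to a trivial combinatorial reduction on Schubert indices.

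For the backward direction I would observe the following: if $[C,p]\in \cC_{g,d}^r(\alpha)$ with $\rho(g,r,d,\alpha)=-1$, then any $\ell\in G^r_d(C,p,\alpha)$ has $\alpha^{\ell}(p)\geq \alpha$, and hence
\[
\rho(\ell,p)=\rho(g,r,d)-w\bigl(\alpha^{\ell}(p)\bigr)\leq \rho(g,r,d)-w(\alpha)=-1<0,
\]
producing the required linear series.

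For the forward direction I would argue by contrapositive. Starting from an $\ell=(L,V)\in G^r_d(C)$ with $\rho(\ell,p)<0$, I would set $\alpha:=\alpha^{\ell}(p)$, so that $w(\alpha)\geq \rho(g,r,d)+1$. The aim is then to construct a Schubert index $\alpha'$ of type $(r,d)$ with $\alpha'_i\leq \alpha_i$ for every $i$ and with exact weight $w(\alpha')=\rho(g,r,d)+1$. Such an $\alpha'$ satisfies $\rho(g,r,d,\alpha')=-1$, and since $\alpha^{\ell}(p)=\alpha\geq \alpha'$, the linear series $\ell$ lies in $G^r_d(C,p,\alpha')$, placing $[C,p]$ in $\cC_{g,d}^r(\alpha')$ as required.

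The only technical ingredient is the construction of $\alpha'$, which I would handle by a greedy reduction: starting from $\alpha'=\alpha$, at each stage decrement a single coordinate $\alpha'_j$ while preserving the Schubert monotonicity $0\leq \alpha'_0\leq \cdots\leq \alpha'_r\leq d-r$. The smallest $j$ with $\alpha'_j>0$ always qualifies (since then $j=0$ or $\alpha'_{j-1}=0$), so the procedure never stalls while $w(\alpha')>0$, and each step lowers the weight by exactly one. Stopping once $w(\alpha')=\rho(g,r,d)+1$ yields the desired index, and one has $\alpha'_r\leq \alpha_r\leq d-r$ automatically. I do not foresee a serious obstacle here: the argument is just a chase of definitions together with this essentially trivial reduction.
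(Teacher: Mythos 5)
Your backward direction is fine, and your greedy reduction of Schubert indices is correct as far as it goes, but the forward direction has a genuine gap: the construction of $\alpha'$ only works when $\rho(g,r,d)\geq -1$. If $C$ carries a linear series $\ell\in G^r_d(C)$ with $\rho(g,r,d)\leq -2$ (so that $\rho(\ell,p)<0$ even when $p$ is an unramified point of $\ell$), then your target weight $\rho(g,r,d)+1$ is negative, and no Schubert index has negative weight. Your greedy procedure would stall at the trivial index, whose associated locus $\cC_{g,d}^r(0,\ldots,0)$ has $\rho(g,r,d,0)=\rho(g,r,d)\neq -1$ and is therefore not among the loci allowed in the statement. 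So in this case you have not placed $[C,p]$ in any $\cC_{g,d}^r(\alpha)$ with $\rho(g,r,d,\alpha)=-1$, and the contrapositive is not established.

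The paper closes exactly this case by changing the degree: one chooses $d'>d$ and a Schubert index $\alpha'$ of type $(r,d')$ with $\alpha'_i\leq d'-d$ for all $i$ and $w(\alpha')=\rho(g,r,d')+1$. Such an $\alpha'$ exists because $\rho(g,r,d')=\rho(g,r,d)+(r+1)(d'-d)$, so the required weight is non-negative and at most $(r+1)(d'-d)$ precisely when $\rho(g,r,d)\leq -1$. Adding the base divisor $(d'-d)p$ to $\ell$ produces a $\mathfrak{g}^r_{d'}$ whose ramification sequence at $p$ is at least $(d'-d,\ldots,d'-d)\geq \alpha'$, whence $[C,p]\in \cC_{g,d'}^r(\alpha')$ with $\rho(g,r,d',\alpha')=-1$. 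Your argument in the range $\rho(g,r,d)\geq -1$ matches the paper's first case (indeed your coordinatewise domination $\alpha'\leq\alpha$ is cleaner than the lexicographic comparison the paper states), but without this degree-raising step the lemma is not proved.
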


\begin{proof}
One implication being obvious, assume first there exists a linear series $\ell\in G^r_d(C)$ with $w\bigl(\alpha^{\ell}(p)\bigr)>\rho(g,r,d)\geq 0$.
Then we can find a Schubert index $$\alpha':0\leq \alpha_0'\leq \ldots \leq \alpha_r'\leq d-r$$ with $w(\alpha')=\rho(g,r,d)+1 \leq w\bigl(\alpha^{\ell}(p)\bigr)$, such that $\alpha'\leq \alpha^{\ell}(p)$ (lexicographically). Hence $\rho(g,r,d,\alpha')=-1$ and  $[C,p]\in \cC_{g,d}^r(\alpha')$.
Finally, assume we are in the case when there exists a linear series $\ell\in G^r_d(C)$ with $\rho(g,r,d)<-1$. Then we can find $d'>d$ and a Schubert index $\alpha':0\leq \alpha_0'\leq \ldots \leq \alpha_r'\leq d'-r$ with $\alpha'_i\leq d'-d$ and $w(\alpha')=\rho(g,r,d')+1$. Hence $[C,p]\in \cC_{g,d'}^r(\alpha')$, which finishes the proof.
\end{proof}

\vskip 4pt

We now turn to Du Val surfaces. In what follows, we denote by $\equiv$ linear equivalence of divisors on varieties. Following \cite{ABFS} Proposition 2.3, we recall that a set of nine distinct points $p_1, \ldots, p_9$ in $\PP^2$ is said to be \emph{general} if on the blown-up plane $S':=\mbox{Bl}_{\{p_1, \ldots, p_9\}}(\PP^2)$, every effective divisor
$$
D'\equiv d\ell-\nu_1 E_1-\cdots-\nu_9 E_9
$$
with $\nu_i\geq 0$ and satisfying $D\cdot J'=0$ is necessarily a multiple of $J'$. In particular, if $p_1, \ldots, p_9$ are general points, then the sum $p_1+\cdots+p_9\in J'$ is not torsion.

\begin{rem}
Examples of sets of nine general points in $\PP^2(\mathbb Q)$ are easy to produce, if one starts with a concrete elliptic curve defined over $\mathbb Q$. For instance, it is shown in \cite{ABFS} that the following points lying on the elliptic curve $E: y^2=x^3+17$ are general:
$p_1=(-2,3)$, $p_2=(-1,-4),\ p_3=(2,5), \ p_4=(4,9)$,  $p_5=(52,375)$, $p_6=(5234, 37866)$,  $p_7=(8, -23)$,   $p_8=(43, 282)$,  and  $p_9=\Bigl(\frac{1}{4}, -\frac{33}{8} \Bigr).$
\end{rem}

Recall the definition (\ref{p10}) of the points $p_{10}^{(g)}\in J'$, where $g\geq 1$.

\begin{prop}\label{notorsion}
Assume that the points $p_1, \ldots, p_9$ are general. Then for $k=2,\ldots, g$, the difference $p_{10}^{(k)}-p_{10}^{(k-1)}\in \mathrm{Pic}^0(J')$ is not torsion.
\end{prop}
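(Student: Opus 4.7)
The plan is to reduce the statement directly to the non-torsion property of $p_1+\cdots+p_9$ on $J'$ that was recorded in the generality discussion preceding the proposition. The key is a one-line telescoping.

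First I would apply the defining relation \eqref{p10} for both $k$ and $k-1$ and subtract, working in the group law of $J'$. The coefficient of each of $p_1,\ldots,p_8$ drops from $-k$ to $-(k-1)$, and the coefficient of $p_9$ drops from $-(k-1)$ to $-(k-2)$, so
$$
p_{10}^{(k)} - p_{10}^{(k-1)} = -(p_1+p_2+\cdots+p_8) - p_9 = -(p_1+\cdots+p_9),
$$
an expression that is in particular independent of $k$.

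Next I would translate this group-law identity into a statement about $\mathrm{Pic}^0(J')$. Via the standard isomorphism $J' \xrightarrow{\sim} \mathrm{Pic}^0(J')$, $p \mapsto \OO_{J'}(p - O)$ (with $O$ the flex point serving as origin of the group law), the line bundle $\OO_{J'}\bigl(p_{10}^{(k)} - p_{10}^{(k-1)}\bigr) \in \mathrm{Pic}^0(J')$ corresponds exactly to the group-law difference $p_{10}^{(k)} - p_{10}^{(k-1)} \in J'$. Consequently, $p_{10}^{(k)} - p_{10}^{(k-1)}$ is torsion in $\mathrm{Pic}^0(J')$ if and only if $p_1+\cdots+p_9$ is a torsion point of $J'$.

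The last step is to invoke the hypothesis: the discussion right before the proposition states (following \cite{ABFS} Proposition 2.3) that if $p_1,\ldots,p_9$ are general, then $p_1+\cdots+p_9$ is not torsion on $J'$. Combined with the previous paragraph, this gives the desired conclusion for every $k=2,\ldots,g$. There is no real obstacle here: the telescoping is immediate and the non-torsion input is already in the cited literature, so the proposition comes down essentially to unwinding definitions.
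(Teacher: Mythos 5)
Your proposal is correct and follows essentially the same route as the paper: subtract the defining relation \eqref{p10} for consecutive values of $k$ to get $p_{10}^{(k-1)}-p_{10}^{(k)}=p_1+\cdots+p_9$ in the group law of $J'$, then invoke the fact that this sum is non-torsion for general points. The extra paragraph identifying the group law of $J'$ with $\mathrm{Pic}^0(J')$ is a harmless elaboration the paper leaves implicit.
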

\begin{proof} Using (\ref{p10}), we obtain that $p_{10}^{(k-1)}-p_{10}^{(k)}= p_1+\cdots+p_9$ (with respect to the group law of $J'$), for each $k\geq 2$. As pointed out, this is not a torsion point on $J'$.
\end{proof}

\vskip 3pt

We now introduce the pointed Du Val pencil in $\cc_g$, which is a lift under the forgetful map $\pi:\cc_g\rightarrow \mm_g$ of the pencil of unpointed curves introduced in Section 4 of \cite{ABFS}.
Recall that $S:=\mbox{Bl}_{p_{10}^{(g)}}(S')$ and we denote by $L_g$ the proper transform of the linear system on $S'$ denoted by the same symbol in the Introduction. The linear system of Du Val curves of genus $g-1$ on $S$, that is,
$$
\Lambda_{g-1}:=\bigl|3(g-1)\ell-(g-1)E_1-\cdots-(g-1)E_8-(g-2)E_9\bigr|
$$
appears as a hyperplane in the $g$-dimensional linear system $L_g$. It consists precisely of the
curves of the form $D+J\in L_g$, where $J\subset S$ denotes the proper transform of $J'$ and $D\in \Lambda_{g-1}$.
Since $J\equiv 3\ell-E_1-\cdots-E_{10}$, note that $D\cdot J=1$.

\vskip 3pt

We now choose a Lefschetz pencil in $L_g$, which has $2g-2=C^2$ base points.
Let $X:=\mbox{Bl}_{2g-2}(S)$ be the blow-up of $S$ at those points. Since $C\cdot E_{10}=1$ for $C\in L_g$, the corresponding fibration $f:X\rightarrow \PP^1$ has a section induced by the proper transform of $E_{10}$ on $X$. This induces a pencil in
the universal curve
$$j:\PP^1\rightarrow \overline{\mathcal{C}}_g.$$

In what follows it will be convenient to use the notation $C_1\cup_p C_2$, for a stable curve consisting of two irreducible components $C_1$ and $C_2$ respectively, meeting transversally at a point $p$.

\begin{prop}\label{numbers}
The intersection numbers of the pointed Du Val pencil with the generators of  $\mathrm{Pic}(\cc_g)$ are as follows:
\begin{align*}
j^*(\lambda) = g, \ j^*(\psi) = 1, \ j^*(\delta_{\mathrm{irr}})= 6(g+1), \ j^*(\delta_{1}) = 1, \ \mbox{ }  j^*(\delta_{i})= 0 \ \mbox{ for } \ i=2,\dots ,g-1.
\end{align*}
\end{prop}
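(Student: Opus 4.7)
The plan is to compute the five intersection numbers by direct analysis of the total space $f\colon X \to \PP^1$ of the Lefschetz pencil. First I would handle $j^*(\psi)$ and $j^*(\lambda)$. The section $\sigma\colon \PP^1 \to X$ that picks out the marked point is the proper transform of $E_{10}\subset S$. Since $C\cdot E_{10}=1$ on $S$ for every $C\in L_g$ and two general members of the pencil meet $E_{10}$ at distinct points, none of the $2g-2$ base points of the pencil lies on $E_{10}$; hence $\sigma^2 = E_{10}^2 = -1$, giving $j^*(\psi) = -\sigma^2 = 1$. For the Hodge class, since $X$ is rational one has $\chi(\OO_X)=1$, and combined with $R^0 f_*\OO_X = \OO_{\PP^1}$, the Leray spectral sequence forces $\chi(R^1 f_*\OO_X)=0$. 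As $R^1 f_*\OO_X$ is locally free of rank $g$ on $\PP^1$ for a family of nodal curves, this yields $\deg R^1 f_*\OO_X = -g$, and therefore $j^*(\lambda)=g$ by Serre duality.

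Next I would count singular fibers topologically. Since blowing up a point on a smooth surface increases the Euler characteristic by $1$, we have $\chi_{top}(X) = \chi_{top}(\PP^2) + 10 + (2g-2) = 2g+11$. A one-nodal curve of arithmetic genus $g$---irreducible or of the form $C_1\cup_p C_2$---has $\chi_{top}=3-2g$, exceeding the smooth-fiber value by exactly $1$. Writing $N$ for the total number of singular fibers in the pencil, the formula
\[
\chi_{top}(X) = \chi_{top}(\PP^1)(2-2g) + N
\]
yields $N = 6g+7$.

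The heart of the argument is the identification of the reducible fibers. Because $J\equiv 3\ell-E_1-\cdots-E_{10}$, one checks directly that $[L_g]=[J]+[L_{g-1}]$, so the hyperplane $\Lambda_{g-1}\subset L_g$ parametrizes the reducible members $D+J$ of $L_g$. The delicate point, which I would extract from the decomposition analysis of \cite{ABFS}, is to verify that no other effective decomposition of $[L_g]$ on $S$ is compatible with the genericity hypothesis on $p_1,\ldots,p_9$; granting this, a generic Lefschetz pencil---a line in $L_g\cong \PP^g$---meets $\Lambda_{g-1}$ transversally in exactly one point, producing a single reducible fiber $D\cup_q J$. Because $J\cdot E_{10}=1$ while $D\cdot E_{10}=0$, the marked point lies on the elliptic component $J$, so this fiber contributes to $\delta_1$ and to no $\delta_i$ with $i\geq 2$. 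Hence $j^*(\delta_1)=1$, $j^*(\delta_i)=0$ for $i=2,\ldots,g-1$, and the remaining $N-1=6(g+1)$ singular fibers are irreducible nodal, giving $j^*(\delta_{\mathrm{irr}})=6(g+1)$. The main obstacle is thus the reducibility analysis; everything else is a direct consequence of the blow-up geometry and the Leray spectral sequence.
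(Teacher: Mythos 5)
Your computation is correct and arrives at the paper's numbers by what is essentially the same route, with one difference of presentation: for $j^*(\lambda)$ and $j^*(\delta_{\mathrm{irr}})$ the paper simply cites \cite{ABFS} Theorem 4.1, whereas you rederive them --- $j^*(\lambda)=\chi(\OO_X)+g-1=g$ via the Leray spectral sequence, and the singular-fibre count $N=\chi_{\mathrm{top}}(X)-2(2-2g)=6g+7$ via the topological Euler characteristic of the blown-up surface. These are exactly the techniques the paper itself deploys for the parallel pencil in Proposition \ref{numbers2}, so your version is a self-contained substitute for the citation rather than a genuinely different argument. The parts the paper does prove directly ($j^*(\psi)=-E_{10}^2=1$; the unique reducible fibre $D\cup J$ with the marked point forced onto the elliptic component because $J\cdot E_{10}=1$ while $D\cdot E_{10}=0$, whence $j^*(\delta_1)=1$ and $j^*(\delta_{g-1})=0$) you reproduce identically, and your bookkeeping $j^*(\delta_{\mathrm{irr}})=N-1=6(g+1)$ is consistent. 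You are also right to isolate the one genuinely delicate input, namely that the only effective decompositions of the class of $L_g$ are those coming from $\Lambda_{g-1}+J$; this is where the genericity of $p_1,\ldots,p_9$ enters, and both you and the paper ultimately outsource it to \cite{ABFS}.
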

\begin{proof}
One has $j^*(\psi)= - E_{10}^2=1$. The restrictions of the classes $\lambda,\delta_{\mathrm{irr}}$, $\delta_2, \ldots, \delta_{g-2}$ follow from \cite{ABFS} Theorem 4.1 and are copied here for the sake of completeness. There exists precisely one element of the pencil $f$ of the type $D+J$, for some $D\in \Lambda_{g-1}$. Since $E_{10}\cdot J =1$ while $E_{10}\cdot D=0$, the marked point lies on the elliptic component of this singular element. The corresponding pointed stable curve is $\bigl[D\cup_{p_{10}^{(g-1)}} J', \ p_{10}^{(g)}\bigr]\in \cc_g$. Hence $j^*(\delta_1)=1$, and since $\pi^*(\delta_1)=\delta_1+\delta_{g-1}$, it follows that $j^*(\delta_{g-1})=0$.
\end{proof}

By direct computation, using (\ref{wei}) and (\ref{bn1}), it follows that the pencil $j(\PP^1)\subset \cc_{g}$ has intersection number zero with the Brill-Noether class $\mathcal{BN}_g$
as well as with the Weierstrass divisor $\ww_g$, that is,
$$
j^*(\mathcal{BN}_g)=(g+3)g-\frac{g+1}{6} (6g+6)-(g-1)=0, \ \  \ \ \mbox{ and }
$$
$$ j^*( [\overline{\mathcal{W}}_g])=-g+{g+1\choose 2}-{g\choose 2}=0.
$$
Since the class of any pointed Brill-Noether divisor lies in the cone spanned by these classes \cite{MR985853} Theorem 1.2, it follows that the intersection number of $j(\PP^1)$ with the closure of any pointed Brill-Noether divisor 
is zero as well.

\vskip 4pt

We are now in a position to complete the proof of our main result.


\begin{proof}[Proof of Theorem \ref{main1}]
We shall establish by induction on $g$ that the general member of the Du Val pencil satisfies the pointed Brill-Noether Theorem.  For $g=1$, we have that $[C,p]\in \cC_1$ and it is well-known that each smooth pointed elliptic curve is Brill-Noether general, see e.g. \cite{MR910206} Theorem 1.1. Assuming the statement for Du Val curves of genus $g-1$, suppose by contradiction that there exist $r,d\geq 1$ and a Schubert index $\alpha$ such that $\dim G^r_d(C,p,\alpha)>\rho(g,r,d, \alpha)$, for each $C\in L_g$, where $\{p\}=C\cap E_{10}$.

\vskip 3pt
Let $j:\PP^1\rightarrow \cc_g$ be a Lefschetz pencil of Du Val curves on $S$. As explained in Proposition \ref{numbers},  the pencil contains  a unique elliptic tail degeneration  $\bigl[D\cup_{p_{10}^{(g-1)}} J', p_{10}^{(g)}\bigr]$, where $D$ is an element of  $\Lambda_{g-1}$. Then the variety $$\overline{G}^r_d\Bigl(D\cup J', p_{10}^{(g)}, \alpha \Bigr)$$ of limit linear series $\ell=(\ell_D, \ell_{J'})\in G^r_d(D)\times G^r_d(J')$ on
$D\cup_{p_{10}^{(g-1)}} J'$ satisfying the ramification condition $\alpha^{\ell}\bigl(p_{10}^{(g)}\bigr)\geq \alpha$ is of dimension at least $\rho(g,r,d,\alpha)+1$. Note that $[D, p_{10}^{(g-1)}]$ can be assumed to be a general Du Val curve of genus $g-1$, for every curve from $\Lambda_{g-1}$ appears as an elliptic tail degeneration in a genus $g$ Du Val pencil.

\begin{figure}[h]
\centering
 \def\svgwidth{0.4\columnwidth}
 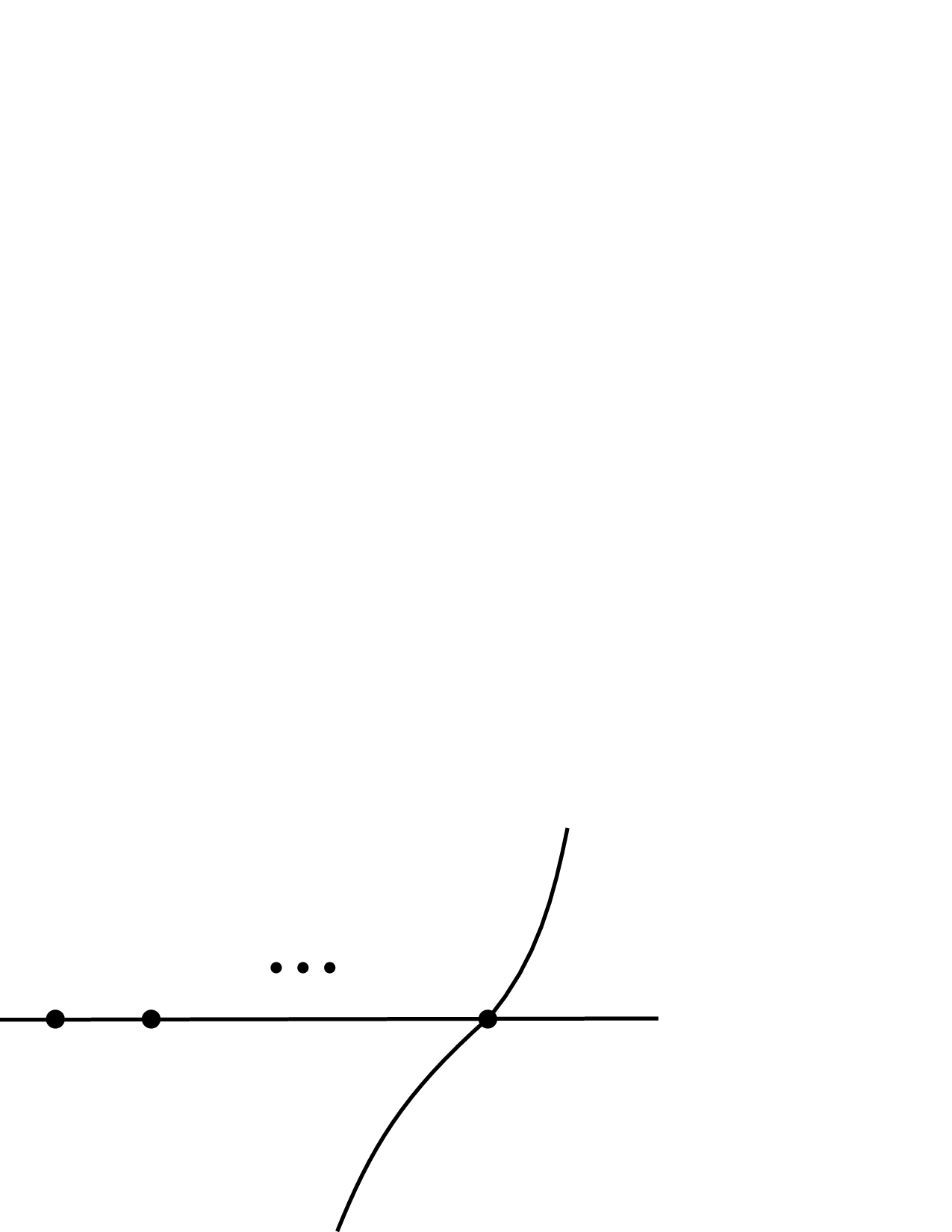
  {\tiny{\caption{The $k$-th step of the elliptic tail specialization in a Du Val pencil.}}}
  \label{fig:cover}
\end{figure}

Let $\ell$ be a general point of an irreducible component $Z$ of $\overline{G}^r_d\Bigl(D\cup J', p_{10}^{(g)}, \alpha \Bigr)$ of maximal dimension, and set $\beta:=\alpha^{\ell_D}(p_{10}^{(g-1)})$. By the additivity of the
Brill-Noether number with respect to marked points, we write
$$
\rho(g,r,d,\alpha)=\rho\Bigl(\ell, p_{10}^{(g)}\Bigr) \geq \rho\Bigl(\ell_D,p_{10}^{(g-1)}\Bigr)+\rho\Bigl(\ell_{J'}, p_{10}^{(g)}, p_{10}^{(g-1)}\Bigr).
$$
By the construction in \cite{MR846932} Theorem 3.3 of the variety of limit linear series, $Z$ is birational to an irreducible component of the product
$$
G^r_d\Bigl(D, p_{10}^{(g-1)}, \beta \Bigr)\times G^r_d\Bigl(J', (p_{10}^{(g-1)}, \beta^{c}), (p_{10}^{(g)}, \alpha)\Bigr).
$$
By assumption, each component of $G^r_d\Bigl(D, p_{10}^{(g-1)}, \beta \Bigr)$ has dimension $\rho(g-1,r,d,\beta)$.

\vskip 3pt

Moving to $J'$, first observe that $\rho\Bigl(\ell_{J'}, p_{10}^{(g)}, p_{10}^{(g-1)}\Bigr)\geq 0$. Indeed, assuming otherwise, we denote by $(a_0, \ldots, a_r)$ and $(b_0, \ldots, b_r)$ the vanishing sequences of $\ell_{J'}$ at the points $p_{10}^{(g-1)}$ and $p_{10}^{(g)}$ respectively, and obtain that there exist indices $0\leq i<j\leq r$ such that
$$
a_i+b_{r-i}=a_j+b_{r-j}=d.
$$
In particular, the underlying line bundle of the linear series $\ell_{J'}$ corresponds to the divisors $a_i\cdot p_{10}^{(g-1)}+b_{r-i}\cdot p_{10}^{(g)}\equiv a_j \cdot p_{10}^{(g-1)}+
b_{r-j}\cdot p_{10}^{(g)}$, from which it follows that $p_{10}^{(g-1)}-p_{10}^{(g)}$ is a torsion class in $\mbox{Pic}^0(J')$, which contradicts Proposition \ref{notorsion}.

\vskip 3pt
Furthermore, it implicitly follows from \cite{MR846932}, and it is spelled-out explicitly in  \cite{MR3296194} Lemma 2.1, that every $2$-pointed elliptic curve $[E, x,y]\in \cM_{1,2}$, where the difference $\OO_E(x-y)$ is not a torsion class, is Brill-Noether general. This follows from the observation that for a line bundle $L\in \mbox{Pic}^d(E)$ which is not given by a divisor on $E$ supported only at $x$ and $y$, the flags in $H^0(E,L)\cong \mathbb C^{d}$ given by the vanishing of sections at $x$ and $y$ respectively, are transversal. In particular, Schubert cycles in $G\bigl(r+1, H^0(E,L)\bigr)$ defined in terms of these flags intersect in the expected dimension. Applying this fact to the case at hand, we find
$$
\dim G^r_d\Bigl(J', (p_{10}^{(g-1)}, \beta^{c}), (p_{10}^{(g)}, \alpha)\Bigr)=\rho(1,r,d, \beta^{c}, \alpha):=\rho(1,r,d)-w(\beta^c)-w(\alpha).
$$

Putting all together, we obtain that
\begin{eqnarray*}
\rho(g,r,d,\alpha)<\dim Z &=& \dim G^r_d\Bigl(D, p_{10}^{(g-1)}, \beta\Bigr)+\dim G^r_d\Bigl(J', (p_{10}^{(g-1)},\beta^c), (p_{10}^{(g)}, \alpha)\Bigr) \\
&=& \rho(g-1,r,d, \beta)+\rho(1,r,d, \beta^{c}, \alpha)\leq \rho(g,r,d, \alpha),
\end{eqnarray*}
which is a contradiction. Therefore, the singular pointed curve $\left(D\cup J', p_{10}^{(g)}\right)$ is Brill-Noether general.
\end{proof}




\begin{cor}\label{ptp} The image of a Du Val pencil $j:\PP^1\rightarrow \cc_g$ is disjoint from all pointed Brill-Noether divisors $\cc_{g,d}^r(\alpha)$.
\end{cor}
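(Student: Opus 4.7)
The plan is to combine the numerical computation already performed in Section \ref{sect1} with Theorem \ref{main1}. Recall that, as computed just before the proof of Theorem \ref{main1}, the pencil $j\colon \PP^1\to \cc_g$ satisfies
\[
j^*\bigl([\ww_g]\bigr)=0 \qquad \text{and} \qquad j^*(\mathcal{BN}_g)=0.
\]
By \cite{MR985853} Theorem 1.2, for every $r,d\geq 1$ and every Schubert index $\alpha$ of type $(r,d)$ with $\rho(g,r,d,\alpha)=-1$, the class $[\cc_{g,d}^r(\alpha)]\in CH^1(\cc_g)$ of the unique divisorial component of the pointed Brill-Noether locus lies in the cone $\mathbb Q_{\geq 0}\cdot [\ww_g]+\mathbb Q_{\geq 0}\cdot \mathcal{BN}_g$. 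Consequently,
\[
j^*\bigl([\cc_{g,d}^r(\alpha)]\bigr)=0.
\]

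Next, I would argue that $j(\PP^1)$ is not contained in any such divisor. Theorem \ref{main1} shows that a general pointed Du Val curve $[C,p]$ satisfies the pointed Brill-Noether Theorem; equivalently, by Lemma \ref{pointediv}, $[C,p]$ lies in none of the loci $\cc_{g,d}^r(\alpha)$ with $\rho(g,r,d,\alpha)=-1$. Since a general point of $j(\PP^1)$ is such a pointed Du Val curve, the image of $j$ is not contained in any pointed Brill-Noether divisor.

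Finally, since $j(\PP^1)$ is a complete irreducible curve which is not contained in the (closure of the) effective divisor $\cc_{g,d}^r(\alpha)$, the scheme-theoretic intersection $j(\PP^1)\cap \cc_{g,d}^r(\alpha)$ is an effective zero-cycle whose degree equals the intersection number $j^*\bigl([\cc_{g,d}^r(\alpha)]\bigr)=0$. An effective zero-cycle of degree zero is empty, so
\[
j(\PP^1)\cap \cc_{g,d}^r(\alpha)=\emptyset
\]
for every choice of $r,d,\alpha$ with $\rho(g,r,d,\alpha)=-1$. The main potential obstacle — namely, that zero intersection number is a priori compatible with the pencil being contained in the divisor — is precisely what Theorem \ref{main1} rules out, which is why the two ingredients have to be combined.
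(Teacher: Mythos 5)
Your proof is correct and follows essentially the same route as the paper: the intersection number $j^*\bigl([\cc_{g,d}^r(\alpha)]\bigr)=0$ comes from the cone result of \cite{MR985853} together with the vanishing of $j^*([\ww_g])$ and $j^*(\mathcal{BN}_g)$, and Theorem \ref{main1} excludes the only other possibility, namely $j(\PP^1)\subset \cc_{g,d}^r(\alpha)$. Your write-up merely spells out in more detail the dichotomy that the paper states in one line.
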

\begin{proof}
As noted in Proposition \ref{numbers}, we have $j(\textup{\PP}^1)\cdot \cc_{g,d}^r(\alpha)=0$. Either $j(\PP^1)\cap \cc_{g,d}^r(\alpha)=\emptyset$, or else, $j(\PP^1)\subset
\cc_{g,d}^r(\alpha)$. The proof of Theorem \ref{main1} rules out the second possibility.
\end{proof}

In general it is not known whether $\cC_{g,d}^r(\alpha)$ is pure of codimension $1$. However, when this happens, for instance in the case of the Weierstrass
divisor $\W_g$, Corollary \ref{ptp} shows that \emph{every} pointed Du Val curve is Brill-Noether general with respect to linear series of that type.

\vskip 4pt

\subsection{Towards the effective cone of  \texorpdfstring{$\cc_g$}{ccg}.}

The Slope Conjecture \cite{MR1031904} on effective divisors on $\mm_g$ used to predict that the Brill-Noether divisors $\mm_{g,d}^r$ of curves with a linear series $\mathfrak g^r_d$ where $\rho(g,r, d)=-1$ are extremal. Via Lazarsfeld's result \cite{MR852158}, an  equivalent formulation
of the Slope Conjecture is that the rational curve $R\subset \mm_g$ induced by a Lefschetz pencil of genus $g$ curves  on a general polarized $K3$ surface $(X,H)$, with $H^2=2g-2$ is \emph{nef}, that is,
it intersects every effective divisor on $\mm_g$ non-negatively.
Note that the intersection numbers of $R$ with the generators of $\mbox{Pic}(\mm_g)$ are given as follows, see for instance \cite{MR2123229}:
$$R\cdot \lambda=g+1, \ R\cdot \delta_{\mathrm{irr}}=6g+18 \ \mbox{ and } R\cdot \delta_i=0, \mbox{ for } i=1, \ldots, \left\lfloor \frac{g}{2} \right\rfloor.$$
Although the Slope Conjecture is false for high $g$, see \cite{MR2123229} and \cite{MR2259923}, it is known to hold for $g\leq 9$ and $g=11$. The statement played an important role in Mukai's work on alternative birational models of $\mm_g$ for $g=7, 8, 9$ and has guided the search for geometric divisors $D$ on $\mm_g$ having small slope,  that is, satisfying $R\cdot D<0$, which  necessarily contain the locus in $\mm_g$ of curves that lie on $K3$ surfaces.

\vskip 4pt

It is an interesting question to find an adequate definition of the notion of slope for effective divisors on the universal curve and an analogue of the Slope Conjecture on $\cc_g$.

\begin{prob}\label{kerdes}
 For what values of $g$ is the Du Val pencil $j:\PP^1\rightarrow \cc_g$ nef, that is, $j^*(D)\geq 0$, for every effective divisor $D$ on $\cc_g$? For which $g$ does this inequality hold for all effective divisors $D$ on $\cc_g$ such that $\pi(D)=\mm_g$?
\end{prob}

In light of Corollary \ref{ptp}, a closely related question is whether the Weierstrass divisor $\ww_g$ is extremal in the effective cone $\mbox{Eff}(\cc_g)$. The hypothesis that $\ww_g$ is extremal has recently received further credence due to \cite{Pol}.
Note that for the pull-backs to $\cc_g$ of the effective divisors on $\mm_{6i+10}$  constructed in \cite{MR2259923}, Problem \ref{kerdes} has a negative answer. For instance, when $g=10$, the divisor in question is
\[
\mathcal{Z}_{10}:=\bigl\{[C,p]\in \mathcal{C}_{10}: C \mbox{ lies on a } K3  \mbox{ surface}\bigr\},
\]
and
$[\overline{\mathcal{Z}}_{10}]=7\lambda-5\delta_{\mathrm{irr}} -\delta_1-\delta_9-12\delta_2-12\delta_8-\cdots \in \mbox{Pic}(\cc_{10})$, see \cite{MR2123229} Theorem 1.6. By applying Proposition \ref{numbers}, we compute $j^*([\overline{\mathcal{Z}}_{10}])=-1<0$. We are unaware of any example of an effective divisor $D$ on $\cc_g$ that is \emph{not} a pull-back of an effective divisor from $\mm_g$ and which satisfies $j^*(D)<0$.

\section{Brill-Noether general two-pointed curves via elliptic surfaces}\label{sect2}

In this section we construct explicit smooth $2$-pointed curves of arbitrary genus verifying the Brill-Noether Theorem. Given a smooth curve $C$, distinct points $p,q\in C$ and two Schubert indices $$\alpha:0\leq \alpha_0\leq \ldots \leq \alpha_r\leq d-r \ \ \mbox{  and }\  \ \beta:0\leq \beta_0\leq \ldots \leq \beta_r\leq d-r,$$ we consider the variety $G^r_d\Bigl(C,(p,\alpha), (q,\beta)\Bigr)$ of linear series $\ell\in G^r_d(C)$ verifying ramification conditions at two points:
$$
\alpha^{\ell}(p) \geq \alpha \ \ \mbox{ and } \ \ \alpha^{\ell}(q)\geq \beta.
$$
We say that $[C,p,q]$ satisfies the $2$-pointed Brill-Noether Theorem if for any $\alpha$ and $\beta$,
\[
\dim G^r_d\Bigl(C,(p,\alpha), (q, \beta)\Bigr)=\rho(g,r,d,\alpha, \beta):=\rho(g,r,d)-w(\alpha)-w(\beta),
\]
unless $G^r_d\Bigl(C,(p,\alpha), (q, \beta)\Bigr)= \emptyset$.
Eisenbud and Harris \cite{MR910206} Theorem 1.1 established the $2$-pointed Brill-Noether Theorem for general $2$-pointed curves by use of degeneration. As in the case of $1$-pointed curves, up to now no explicit example of a smooth Brill-Noether general $2$-pointed curve has been known. We construct such curves using decomposable elliptic ruled surfaces.

\vskip 5pt

We start with an elliptic curve $J$ and consider a non-torsion line bundle $\eta\in \mbox{Pic}^0(J)$. Let  $$\phi:Y:=\PP(\OO_J\oplus \eta)\rightarrow J$$ be the ruled surface corresponding  to a \emph{decomposable} rank $2$ vector bundle. We denote by $J_0$ and $J_1$ the disjoint sections of $Y$ such that
$$
N_{J_0/Y}=\eta \mbox{ and } N_{J_1/Y}=\eta^{\vee}.
$$
In particular, $J_0^2=J_1^2=0$. Observe that $J_1\equiv J_0-\phi^*(\eta)$.
We fix a point $r\in J$ and let $f=f_r:=\phi^{-1}(r)$ be the corresponding ruling. For each $g\geq 1$, we denote by $s=s^{(g)}$ the point on the base elliptic curve $J$ determined by
$$
\OO_J(s^{(g)}-r)=\eta^{\otimes g}.
$$
Since $\eta$ is not a torsion class, we have $s^{(g)}\neq r$, for all $g\geq 1$. Furthermore, the difference $s^{(g)} - s^{(g-1)} \in \mbox{Pic}^0(J)$ is not a torsion class. As explained in the Introduction, we set
$$
\{p\}=J_1\cdot f_r \ \mbox{ and } \ \{q^{(g)}\}:=J_0\cdot f_{s^{(g)}}.
$$

\begin{lem}
\label{bpoints}
We have that $h^0(Y, \OO_{Y}(gJ_0+f_r))=g+1$. The general point of the linear system $|gJ_0+f_r|$ is a smooth curve of genus $g$ passing through the points $p$ and $q^{(g)}$.
\end{lem}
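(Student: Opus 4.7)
The plan is to break the statement into three parts: the dimension count, the genus computation, and the analysis of the base locus plus smoothness of a general member.

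For the dimension, I would argue inductively via the restriction sequence
\[ 0 \to \OO_Y((g-1)J_0 + f_r) \to \OO_Y(gJ_0 + f_r) \to \OO_{J_0}(gJ_0 + f_r) \to 0. \]
Under the identification $J_0 \cong J$ with $N_{J_0/Y} = \eta$, the restriction is $\OO_J(g\eta + r) = \OO_J(s^{(g)})$, a degree-one line bundle on an elliptic curve, hence with $h^0 = 1$ and $h^1 = 0$. The base case $g = 0$ reads $\phi_*\OO_Y(f_r) = \OO_J(r)$, with the same cohomology, and induction gives $h^0(Y, \OO_Y(gJ_0 + f_r)) = g+1$ (and $H^1 = 0$ along the way). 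For the genus of a smooth member $C$, I would apply adjunction with $C^2 = 2g$ and $K_Y \cdot C = -2$ (using $K_Y \cdot J_0 = 0$, valid since $J_0$ is elliptic with $J_0^2 = 0$, together with $K_Y \cdot f = -2$), obtaining $g(C) = g$.

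For the base locus, I would exploit two distinguished effective divisors in the linear system: $gJ_0 + f_r$ itself, and the linearly equivalent divisor $gJ_1 + f_{s^{(g)}}$, where the equivalence is immediate from $J_1 \equiv J_0 - \phi^*\eta$ and the defining relation for $s^{(g)}$. The supports of these two divisors meet only in $(J_1 \cap f_r) \cup (J_0 \cap f_{s^{(g)}}) = \{p, q^{(g)}\}$, so the base locus is contained in this set. To show $p$ and $q^{(g)}$ are actual base points, I would exhibit an explicit basis of $H^0(Y, \OO_Y(gJ_0 + f_r))$: if $s_0 \in H^0(\OO_Y(J_0))$, $s_1 \in H^0(\OO_Y(J_1))$, and $\tau_i \in H^0(J, \OO_J(s^{(g-i)}))$ denote the (essentially unique) nonzero sections, then $\phi^*(\tau_i) \cdot s_0^i \cdot s_1^{g-i}$ for $i = 0, \ldots, g$ form a basis with divisors $iJ_0 + (g-i)J_1 + f_{s^{(g-i)}}$. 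Each of these divisors contains $p$ (because either $g-i \geq 1$, contributing $J_1$, or $i = g$, contributing $f_{s^{(0)}} = f_r$) and similarly each contains $q^{(g)}$.

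Smoothness of a general $C$ then follows from Bertini off the base locus together with a local check at $p$ and $q^{(g)}$. Near $p$ in coordinates $(x, y)$ with $J_1 = V(x)$ and $f_r = V(y)$, the basis sections reduce to $x^{g-i}$ for $i < g$ and to $y$ for $i = g$ up to units, so a general combination has nondegenerate linear part $c_{g-1} x + c_g y$, and is therefore smooth at $p$; the analysis at $q^{(g)}$ is symmetric. I expect the only fussy point to be the projective-bundle bookkeeping --- namely identifying $\OO_Y(gJ_0 + f_r) = \OO_Y(g) \otimes \phi^*\OO_J(s^{(g)})$ and matching $J_0, J_1$ with the correct summands of $\OO_J \oplus \eta$ --- after which the rest of the argument is routine.
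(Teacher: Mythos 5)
Your proof is correct, and in two places it genuinely departs from the paper's argument. For the dimension count the paper pushes forward to $J$ and applies Riemann--Roch to $\OO_J(r)\otimes \mathrm{Sym}^g(\OO_J\oplus\eta)$, a sum of $g+1$ pairwise distinct degree-one line bundles; your inductive use of the restriction sequence to $J_0$ is the same computation unrolled, and has the mild advantage of recording $H^1=0$ along the way. The genus computation via adjunction is identical. The real divergence is in the base-locus analysis: the paper invokes an external result (\cite{FGP} Proposition 11) to confine the base locus to $J_0\cup J_1=|-K_Y|$ and then identifies $\OO_{J_0}(gJ_0+f_r)$ and $\OO_{J_1}(gJ_0+f_r)$ as degree-one line bundles on elliptic curves, whose unique effective divisors are $q^{(g)}$ and $p$ respectively; you instead intersect the supports of the two reducible members $gJ_0+f_r$ and $gJ_1+f_{s^{(g)}}$ to bound the base locus, and then exhibit the explicit basis $\phi^*(\tau_i)\, s_0^i s_1^{g-i}$ with divisors $iJ_0+(g-i)J_1+f_{s^{(g-i)}}$ to confirm that $p$ and $q^{(g)}$ are genuine base points. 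This makes the lemma self-contained at the cost of the projective-bundle bookkeeping you flag (linear independence of your $g+1$ sections follows, e.g., by restricting to a general fiber, where they become the monomials $u^iv^{g-i}$). Finally, your Bertini-plus-local-coordinates check of smoothness at the two base points addresses a point the paper's proof passes over in silence, and the local expansions $x^{g-i}$ (for $i<g$) and $y$ at $p$ are exactly right, since $f_{s^{(g-i)}}$ misses $p$ for $i<g$ because $\eta$ is non-torsion. Both routes are sound; yours trades a citation for an explicit basis and is somewhat longer but more verifiable.
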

\begin{proof}
By direct calculation, using Riemann-Roch, we find that
$$
h^0(Y, \OO_{Y}(gJ_0+f_r))=h^0\Bigl(\OO_J(r)\otimes \mbox{Sym}^g(\OO_J\oplus \eta)\Bigr)=\mbox{deg}\Bigl(\OO_J(r)\otimes \mbox{Sym}^g(\OO_J\oplus \eta)\Bigr)=g+1.
$$
Furthermore, since $K_{Y}\equiv -2J_0+\phi^*(\eta)\equiv -2J_1+\phi^*(\eta^{\vee})$, from the adjunction formula we obtain that a smooth curve $C\in |gJ_0+f_r|$ has genus $g$.

From \cite{FGP}  Proposition 11,  since $\eta$ is non-torsion, the base points of $|gJ_0+f_r|$ lie on $J_0+J_1 = |{-K_Y}|$.
Since $\OO_{J_1}(gJ_0+f_r)=\OO_{J_1}(p)$, the point $p$ must lie in the base locus of $|gJ_0+f_r|$.
Finally, since $\OO_{J_0}(gJ_0+f_r)=\eta^{\otimes g}\otimes \OO_{J_0}(f_r)=\OO_{J_0}(q^{(g)})$, it follows that $q^{(g)}$ belongs to each curve $C\in |gJ_0+f_r|$. Hence, the base locus of $|gJ_0+f_r|$ consists of the points $p$ and~$q^{(g)}$.
\end{proof}

Therefore, on each curve from the linear system $|gJ_0+f_r|$ we can single out two marked points, $p$ and $q=q^{(g)}$. These are precisely the points for which the Brill-Noether Theorem will be proved.

\begin{thm}\label{main4}
The $2$-pointed curve $[C,p,q]\in \cM_{g,2}$, where $C \in |gJ_0+f_r|$ is general and $p$ and $q:=q^{(g)}$ are as above, verifies the $2$-pointed Brill-Noether Theorem, that is,
\[
\dim G^r_d\Bigl(C, (p, \alpha), (q^{},\beta)\Bigr)=\rho(g,r,d,\alpha,\beta)
\quad\quad \mbox{ or } \quad\quad G^r_d\Bigl(C, (p, \alpha), (q^{},\beta)\Bigr) = \emptyset,
\]
for all Schubert indices $\alpha$ and $\beta$.
\end{thm}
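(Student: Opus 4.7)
The plan is to adapt the elliptic-tail degeneration used in the proof of Theorem \ref{main1}, replacing the cubic $J$ on the Du Val surface by the section $J_0 \subset Y$. The induction is on $g$: I specialize a general smooth $C \in |gJ_0 + f_r|$ to the reducible curve $X_0 := D \cup_{q^{(g-1)}} J_0$, where $D$ is a general member of the hyperplane $|(g-1)J_0 + f_r| \subset |gJ_0 + f_r|$. First I would check that $D$ is smooth of genus $g-1$ and passes through both $p$ and $q^{(g-1)}$, which are base points of $|(g-1)J_0 + f_r|$ by Lemma \ref{bpoints} applied in genus $g-1$; next, that $D$ meets $J_0$ transversally at the single point $q^{(g-1)}$, via the identification $\OO_{J_0}((g-1)J_0 + f_r) \cong \OO_{J_0}(q^{(g-1)})$, which is precisely the defining relation for $q^{(g-1)}$. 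Since $p \in J_1$ is disjoint from $J_0$, the two marked points of $X_0$ sit on distinct components: $p \in D$ and $q := q^{(g)} \in J_0 \setminus \{q^{(g-1)}\}$, the latter inequality because $\OO_{J_0}(q - q^{(g-1)}) \cong \eta$ is non-torsion.

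The base case $g = 1$ is immediate: any smooth $C \in |J_0 + f_r|$ is a section of $\phi$, so $\phi|_C$ identifies $C$ with $J$, sending $p$ to $r$ and $q$ to $s^{(1)}$, giving $\OO_C(q - p) \cong \eta$, which is non-torsion. Hence $[C, p, q]$ is $2$-pointed Brill-Noether general by the transversal-flags argument of \cite{MR3296194} Lemma 2.1 invoked already in the proof of Theorem \ref{main1}. For the inductive step, suppose for contradiction that for some Schubert indices $\alpha, \beta$ and the general $C$, the variety $G^r_d\bigl(C, (p, \alpha), (q, \beta)\bigr)$ has a component of dimension strictly greater than $\rho(g, r, d, \alpha, \beta)$. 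Properness of the relative variety of limit linear series over a pencil containing $X_0$ then produces an irreducible component $Z$ of $\overline{G}^r_d\bigl(X_0, (p, \alpha), (q, \beta)\bigr)$ of dimension at least $\rho(g, r, d, \alpha, \beta) + 1$. For a general $\ell = (\ell_D, \ell_{J_0}) \in Z$, setting $\gamma := \alpha^{\ell_D}(q^{(g-1)})$ forces $\alpha^{\ell_{J_0}}(q^{(g-1)}) \geq \gamma^c$, and by \cite{MR846932} Theorem 3.3 the component $Z$ is birational to
$$
G^r_d\bigl(D, (p, \alpha), (q^{(g-1)}, \gamma)\bigr) \times G^r_d\bigl(J_0, (q^{(g-1)}, \gamma^c), (q^{(g)}, \beta)\bigr).
$$
By the inductive hypothesis applied to the general $[D, p, q^{(g-1)}]$ produced by the same construction, the first factor has dimension $\rho(g-1, r, d, \alpha, \gamma)$; since $\OO_{J_0}(q - q^{(g-1)}) = \eta$ is non-torsion, \cite{MR3296194} Lemma 2.1 gives dimension $\rho(1, r, d, \gamma^c, \beta)$ on the second. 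The additivity identity
$$
\rho(g-1, r, d, \alpha, \gamma) + \rho(1, r, d, \gamma^c, \beta) = \rho(g, r, d, \alpha, \beta),
$$
which follows directly from $w(\gamma) + w(\gamma^c) = (r+1)(d-r)$, contradicts $\dim Z \geq \rho(g, r, d, \alpha, \beta) + 1$.

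The main technical point I expect to require care, though not a conceptual obstacle, is the passage from smooth nearby fibers to $X_0$: one must verify that the Eisenbud-Harris theory of $\overline{G}^r_d$ with two marked sections genuinely forces any family of $\mathfrak g^r_d$'s of dimension $\geq \rho + 1$ on the smooth fibers to produce a point of $\overline{G}^r_d(X_0)$ respecting the prescribed ramification at both marked points. Since $X_0$ is of compact type with a single node and the two marked points specialize to smooth points on distinct components, this falls squarely within the original Eisenbud-Harris framework; the ramification at each marked point is tracked on whichever component carries it, exactly as in the $1$-pointed argument for Theorem \ref{main1}. Once this is set up, the rest of the proof is a formal dimension count.
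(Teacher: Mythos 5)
Your proposal is correct and follows essentially the same route as the paper's own proof: the same elliptic-tail specialization to $D\cup_{q^{(g-1)}}J_0$ inside the hyperplane $\{J_0\}+|(g-1)J_0+f_r|$, the same use of the non-torsion class $q^{(g)}-q^{(g-1)}=\phi^*(\eta)$ to make $[J_0,q^{(g-1)},q^{(g)}]$ Brill--Noether general, and the same additivity/dimension count to reach a contradiction. Your explicit treatment of the base case $g=1$ and of the transversality of $D\cap J_0$ is slightly more detailed than the paper's, but the argument is the same.
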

\begin{proof}
Assume by contradiction that for a $2$-pointed curve $[C,p,q^{(g)}]$, where $C\in |gJ_0+f|$ is a general element, the Brill-Noether Theorem fails for certain Schubert indices $\alpha$ and $\beta$, that is, there exists a component of $G^r_d\Bigl(C,(p,\alpha), (q, \beta)\Bigr)$ whose dimension exceeds $\rho(g,r,d,\alpha,\beta)$. Then, similarly to the
proof of Theorem \ref{main1}, we consider a specialization of $C$ to the  sublinear system  $\{J_0\}+|(g-1)J_0+f_r|\cong \PP^{g-1}$, which appears as a hyperplane in $|gJ_0+f_r|\cong \PP^g$. The $2$-pointed  curve corresponding to the general element of this subsystem is a curve of the form
$$
[D\cup J_0, \ p\in D,\ q^{(g)}\in J_0]\in \mm_{g,2},
$$
where $D\in |(g-1)J_0+f_r|$ is a smooth curve of genus $g-1$ passing through $p$ and the point $q^{(g-1)}\in J_0\cdot f_{s^{(g-1)}}$. Note that $D\cap J_0=\{q^{(g-1)}\}$. Observe moreover that under the isomorphism $\phi=\phi_{|J_0}:J_0\stackrel{\cong}\rightarrow J$, we have
$$
q^{(g)} - q^{(g-1)}=\phi^*(s^{(g)})-\phi^*(s^{(g-1)})=\phi^*(\eta)\in \mbox{Pic}^0(J_0),
$$
that is, the difference $q^{(g)}-q^{(g-1)}$ is not torsion on $J_0$.

\vskip 4pt

The proof now follows by induction. By semicontinuity, the variety of limit linear series $\ell$ of type $\mathfrak g^r_d$ on $D\cup J_0$ verifying the ramification conditions $\alpha^{\ell}(p)\geq \alpha$ and $\alpha^{\ell}(q^{(g)})\geq \beta$ must have a component $Z$ of dimension strictly greater than $\rho(g,r,d,\alpha,\beta)$. Denote by $\ell=(\ell_D, \ell_{J_0})$ a general point of $Z$. We may assume that $\ell$ is a refined limit linear series. Set $\gamma:=\alpha^{\ell_D}(q^{(g-1)})$. Then $Z$ is birationally isomorphic to the product
$$G^r_d\Bigl(D, (p,\alpha), (q^{(g-1)},\gamma)\Bigr)\times G^r_d\Bigl(J_0,(q^{(g-1)},\gamma^c),(q^{(g)},\beta)\Bigr).$$

By induction on the genus, we may assume that $[D,p,q^{(g-1)}]\in \cM_{g-1,2}$ satisfies the $2$-pointed Brill-Noether Theorem, in particular
$$
\dim G^r_d\Bigl(D, (p,\alpha), (q^{(g-1)},\gamma)\Bigr)=\rho(g-1,r,d, \alpha, \gamma).
$$
Since $q^{(g)}-q^{(g-1)}\in \mbox{Pic}^0(J_0)$ is not torsion, as we have observed $[J_0,q^{(g-1)}, q^{(g)}]\in \cM_{1,2}$ is a Brill-Noether general $2$-pointed curve, hence
$$
\dim G^r_d\Bigl(J_0,(q^{(g-1)},\gamma^c),(q^{(g)},\beta)\Bigr)=\rho(1,r,d,\gamma^c,\beta).
$$

Using the additivity of the Brill-Noether number, we have
$$
\dim  Z=\rho(g-1,r,d,\alpha,\gamma)+\rho(1,r,d,\gamma^c,\beta)=\rho(g,r,d,\alpha,\beta),
$$
a contradiction.
\end{proof}

\begin{rem}
Since a Brill-Noether general $n$-pointed curve supports a Brill-Noether general $m$-pointed curve for all $m<n$ obtained by dropping $n-m$ of the marked points, it follows that the curve $C\in |gJ_0+f_r|$ satisfies the (unpointed) Brill-Noether Theorem as well.
\end{rem}

\begin{rem}
The Du Val curves considered in \cite{ABFS} and in Section \ref{sect1} of this paper are known to lie in the closure in $\cM_g$ of the locus of curves of genus $g$  lying on a $K3$ surface. Algebraic surfaces $\overline{S}\subset \PP^g$ having canonical hyperplane sections have been classified by Epema \cite{MR753823}. All such surfaces are potentially limits in $\PP^g$ of smooth polarized $K3$ surfaces of degree $2g-2$. A criterion for when such surfaces smooth to  $K3$ surfaces is given in \cite{ABS-W} Corollary~26. Du Val surfaces, as well as the decomposable elliptic ruled surfaces considered in Theorem~\ref{main3}, are minimal models of corresponding instances of such objects, see \cite{MR753823}, as well as \cite{ABS-W} Proposition 29. It is natural to ask whether there are explicit examples of Brill-Noether general pointed curves, other than those which are limits of curves on $K3$ surfaces.
\end{rem}

\subsection{Brill-Noether general pointed curves on indecomposable elliptic ruled surfaces}
Here we show how Brill-Noether general one-pointed smooth curves can be constructed also on \emph{indecomposable} elliptic ruled surfaces.
We fix again an elliptic curve $J$ and denote by $\mathcal{E}$ the unique indecomposable vector bundle on $J$ defined by the exact sequence
$$
0\longrightarrow \OO_J\longrightarrow \mathcal{E}\longrightarrow \OO_J\longrightarrow 0.
$$
Let $\varphi\colon X':=\PP(\mathcal{E})\rightarrow J$ be the induced ruled surface. We fix a point $r\in J$ and set $f:=\varphi^{-1}(r)$, therefore $f^2=0$. Let $J_0\subset X'$ be
the unique section of $\varphi$ having $N_{J_0/Y'}=\OO_{J_0}$ and set $\{q\}:=J_0\cdot f_r$.
In a way similar to the proof of Lemma \ref{bpoints}, one can show that the  general element of the linear system $|gJ_0+f|$ is a curve of genus $g$ passing through the point $q$.


Each curve $C\in |gJ_0+f|$ has a distinguished marked point, namely $q\in C\cdot J_0$. In  \cite{MR1218267}, Treibich considers curves in the linear system $|gJ_0+f|$  and sketches an argument using Fay's trisecant formula for showing that a general curve $C\in |gJ_0+f|$ is Brill-Noether general. Reasoning in a way  very similar to  the proof of Theorem \ref{main4}, we prove the stronger fact that the general curve $[C,q]$ satisfies the pointed Brill-Noether Theorem.

\vskip 4pt

Since the linear system $|gJ_0+f|$ cuts out on a general curve $C\in |gJ_0+f|$ the linear system $|\omega_C|+2q$, it follows that $|gJ_0+f|$ has a further base point $q'$, infinitely near to~$q$.\footnote{Added in February 2023. We are grateful to Edoardo Sernesi for pointing out to us this fact, which was overlooked in the published version of this paper.} We denote by $\epsilon\colon X:=\mbox{Bl}_{q, q'}(X')\rightarrow X'$ the blow-up of $X'$ at $q$ and $q'$ and by $E$, respectively $E'$, the corresponding exceptional divisors. We keep denoting by $J_0$ and $f$ the strict transforms of the curves denoted by the same symbols on $X'$.  Finally, let $C\subset X$ be the strict transform of a curve in the linear system $|gJ_0+f|$. Then $C\cdot E'=1$ and $C^2=2g-2$. A Lefschetz pencil in this linear system induces a family of pointed curves
$$\iota\colon \PP^1\rightarrow \cc_g.$$

\begin{prop}
\label{numbers2}
The numerical features of the pencil $\iota\colon \PP^1\rightarrow \cc_g$ are as follows:
$$
\iota^*(\lambda) = g-1, \ \ \iota^*(\psi) = 1, \ \iota^*(\delta_{\mathrm{irr}})= 6(g-1), \ \ \iota^*(\delta_{1}) = 1, \ \ \iota^*(\delta_{g-1})=1,$$
$$  \mbox{ and  }  \iota^*(\delta_{i})= 0 \ \mbox{ for } \ i=2,\dots ,g-2.
$$
\end{prop}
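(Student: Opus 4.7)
The plan is to follow Proposition~\ref{numbers}: resolve the pencil, identify the singular fibers, and extract the intersection numbers. The key point where the argument requires care is the location of the marking section.

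First I would locate the $2g-1$ base points of the pencil $|C|$ on $X$. Any pencil in the $g$-dimensional system $|gJ_0+f|$ on $X'$ meets the hyperplane $\{J_0\}+|(g-1)J_0+f|$ in exactly one point, producing a unique reducible member $J_0+D$ whose strict transform on $X$ is a chain linearly equivalent to $\tilde{J}_0+E+\tilde{D}$. Since this reducible member contains $E$ as a component, its defining section vanishes on $E$, so the restriction of the pencil to $E\cong\PP^1$ is effectively one-dimensional and has a common zero $q_1\in E$; this yields a single pencil base point on $E$, while the remaining $2g-2$ base points lie on $\tilde{D}$. Blowing up all $2g-1$ base points via $\mu:\tilde{X}\to X$, the resulting fibration $\tilde{f}:\tilde{X}\to\PP^1$ has marking section equal to the exceptional divisor $E_1$ over $q_1$---not the strict transform $E'=\mu^{*}E-E_1$, which becomes a $(-2)$-curve absorbed into the reducible fiber. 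Since $E_1^2=-1$, this gives $\iota^{*}(\psi)=1$.

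Next I would analyze the unique reducible fiber $\tilde{J}_0+E'+\tilde{D}$, whose marked point lies on the central $\PP^1$-component $E'$ at $E'\cap E_1$, making $E'$ a stable rational component with three special points. The node $\tilde{J}_0\cap E'$ separates an unmarked elliptic tail from a marked genus $(g-1)$ part, contributing to $\delta_{g-1}$; the node $E'\cap\tilde{D}$ separates an unmarked genus $(g-1)$ tail from a marked genus $1$ part, contributing to $\delta_1$. Since both nodes are smooth points of $\tilde{X}$, each contributes with multiplicity $1$, yielding $\iota^{*}(\delta_1)=\iota^{*}(\delta_{g-1})=1$ and $\iota^{*}(\delta_i)=0$ for $2\leq i\leq g-2$.

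Finally I would compute $\iota^{*}(\lambda)$ and $\iota^{*}(\delta_{\mathrm{irr}})$ from topological invariants. Since $X'$ is a $\PP^1$-bundle over an elliptic curve and blow-ups preserve $\chi(\OO)$, we have $\chi(\OO_{\tilde{X}})=0$; combining this with the Leray identity $\chi(\OO_{\tilde{X}})=1+\iota^{*}(\lambda)-g$ for $\tilde{f}$ gives $\iota^{*}(\lambda)=g-1$. For $\iota^{*}(\delta_{\mathrm{irr}})$, comparing $e(\tilde{X})=e(X')+2g=2g$ with $e(\tilde{X})=2(2-2g)+\iota^{*}(\delta)$ (each node in a singular fiber adding $1$ to the Euler characteristic) yields $\iota^{*}(\delta)=6g-4$, and subtracting the two nodes from the reducible fiber gives $\iota^{*}(\delta_{\mathrm{irr}})=6(g-1)$. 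The main obstacle is recognizing that $E$ is absorbed as a component of the reducible fiber and therefore fails to serve as the marking section; this forces the more delicate identification of $E_1$ as the true section, after which the remaining calculations are routine.
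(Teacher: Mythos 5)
Your proof is correct and follows essentially the same route as the paper: blow up the base points of the pencil, get $\iota^*(\lambda)=g-1$ from $\chi(\OO_{\widetilde{X}})=0$, get the total $\iota^*(\delta)=6g-4$ from Euler characteristics (equivalently, Noether's formula), and read off $\iota^*(\delta_1)=\iota^*(\delta_{g-1})=1$ from the unique reducible fiber $\widetilde{J}_0\cup E'\cup \widetilde{D}$ with the marked point on the central rational component. Your treatment of the marking section is in fact more careful than the paper's: since every pencil in $|C|$ contains a member having $E$ as a component, the pencil restricted to $E$ degenerates and acquires a base point $q_1\in E$, so the section is the exceptional curve over $q_1$ rather than the strict transform of $E$ (which becomes a $(-2)$-curve in the reducible fiber) --- a subtlety the paper elides by asserting that $|C|$ is base point free --- although both descriptions yield $\iota^*(\psi)=-(\mbox{section})^2=1$ and the same boundary contributions.
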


\begin{proof}
We blow-up $X$ at the $2g-2$ base points of a Lefschetz pencil in $|C|$ and denote by $h\colon \widetilde{X}\rightarrow \PP^1$ the induced fibration. Clearly $h^2(\widetilde{X},\OO_{\widetilde X})=0$ and $H^1(\widetilde{X},\OO_{\widetilde X})\cong H^1(J, \OO_J)$ is $1$-dimensional, therefore $\chi(\widetilde{X},\OO_{\widetilde X})=0$. Accordingly,
$$
\iota^*(\lambda)=\chi(\widetilde{X},\OO_{\widetilde{X}})+g-1=g-1.
$$
By the Noether formula, the total number of singular fibres in the pencil $\iota$ is given by
$$
\iota^*(\delta)=c_2(\widetilde{X})+4g-4=12\chi(\widetilde{X},\OO_{\widetilde{X}})-K_{\widetilde{X}}^2+4g-4=6g-4.
$$
In the pencil $\iota$ there exists a unique curve from the linear system $|(g-1)J_0+f|+J_0$, which  is viewed as a hyperplane inside
$|gJ_0+f|$. This singular curve is of the type
\begin{equation}\label{elltail3}
t=\bigl[D\cup E  \cup J_0, \widetilde{q}:=E\cdot E'\bigr]\in \cc_g,
\end{equation}
where $D\in |(g-1)J_0+f|$ is a smooth curve of genus $g-1$ with $D\cap J_0=\emptyset$ (on $X$).
Forgetting the marked point $\widetilde{q}$, the stable model of this curve is $[D\cup_q J_0]\in \mm_g$.
The point $t$ lies on both boundary divisors $\Delta_1$ and $\Delta_{g-1}$, which implies $\iota^*(\delta_1)=\iota^*(\delta_{g-1})=1$, therefore $\iota^*(\delta_{\mathrm{irr}})=6(g-1)$.
\end{proof}

\begin{cor} The numerical features of the pencil  $\bar{\iota}:=\pi\circ \iota:\PP^1\rightarrow \mm_g$ obtained by forgetting the marked point, are given by:
$$
\bar{\iota}^*(\lambda)=g-1, \ \bar{\iota}^*(\delta_{\mathrm{irr}})=6(g-1), \ \bar{\iota}^*(\delta_1)=2, \ \bar{\iota}^*(\delta_i)=0, \mbox{ for } i=2, \ldots, \left\lfloor \frac{g}{2} \right\rfloor.
$$
\end{cor}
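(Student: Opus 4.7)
The plan is to derive each intersection number by combining the projection formula $\bar{\iota}^*=\iota^*\circ \pi^*$ with the pull-back formulas for $\pi:\cc_g\to \mm_g$ recalled at the beginning of Section~\ref{sect1}, together with the numerical values already computed in Proposition~\ref{numbers2}. Since the entries of Proposition~\ref{numbers2} give all the intersection numbers of $\iota$ with the generators of $\mathrm{Pic}(\cc_g)$, the corollary is essentially a bookkeeping exercise.

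First I would note that $\pi^*(\lambda)=\lambda$ and $\pi^*(\delta_{\mathrm{irr}})=\delta_{\mathrm{irr}}$, so $\bar{\iota}^*(\lambda)=\iota^*(\lambda)=g-1$ and $\bar{\iota}^*(\delta_{\mathrm{irr}})=\iota^*(\delta_{\mathrm{irr}})=6(g-1)$ drop out immediately. For the boundary classes indexed by $i$ with $1\leq i<g/2$, the relation $\pi^*(\delta_i)=\delta_i+\delta_{g-i}$ gives
\[
\bar{\iota}^*(\delta_i)=\iota^*(\delta_i)+\iota^*(\delta_{g-i}).
\]
Substituting the values of Proposition~\ref{numbers2}, namely $\iota^*(\delta_1)=\iota^*(\delta_{g-1})=1$ and $\iota^*(\delta_j)=0$ for $2\leq j\leq g-2$, yields $\bar{\iota}^*(\delta_1)=2$ and $\bar{\iota}^*(\delta_i)=0$ for $2\leq i<g/2$. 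When $g$ is even, the identity $\pi^*(\delta_{g/2})=\delta_{g/2}$ combined with $\iota^*(\delta_{g/2})=0$ (since $g/2\in\{2,\dots,g-2\}$ for $g\geq 4$) gives $\bar{\iota}^*(\delta_{g/2})=0$.

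There is really no obstacle here: the only conceptual point to check is that the unique singular fibre $t=[D\cup E\cup J_0,\widetilde{q}]$ of the pencil $\iota$, whose stable model after forgetting the marked point is $[D\cup_q J_0]\in \mm_g$, lies on the boundary divisor $\Delta_1\subset\mm_g$ with multiplicity $2$. This is consistent with the fact that $\iota$ meets \emph{both} $\Delta_1$ and $\Delta_{g-1}$ on $\cc_g$ with multiplicity $1$ at this point, the two incidences corresponding to the two possible locations of the forgotten marked point on the stable model. Hence $\bar{\iota}^*(\delta_1)=2$, confirming the geometric picture. This completes the proof.
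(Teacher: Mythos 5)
Your argument is correct and is essentially identical to the paper's proof, which likewise just combines $\bar{\iota}^*=\iota^*\circ\pi^*$ with the pull-back formulas for $\pi$ and the values from Proposition~\ref{numbers2}, noting in particular that $\bar{\iota}^*(\delta_1)=\iota^*(\delta_1)+\iota^*(\delta_{g-1})=2$. The additional geometric remark about the singular fibre is a harmless consistency check and does not change the argument.
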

\begin{proof} The only thing which has to be observed is that $\bar{\iota}^*(\delta_1)=\iota^*(\delta_1)+\iota^*(\delta_{g-1})=2$.
\end{proof}

Using Proposition \ref{numbers2} it is now immediate to check that the pencil $\iota$, just like the Du Val pencil, satisfies the relations
$$
\iota^*(\mathcal{BN}_g)=0 \ \mbox{ and } \iota^*([\ww_g])=0.
$$

\begin{thm}\label{main2} The general pointed curve $[C, q]$, where $C\in |gJ_0+f|$ and $\{q\}=J_0\cdot f$, verifies the pointed Brill-Noether Theorem.
\end{thm}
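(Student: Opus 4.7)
My plan is to argue by induction on $g$, closely mirroring the proof of Theorem \ref{main4}. The base case $g=1$ is immediate: every smooth $1$-pointed elliptic curve is Brill-Noether general, as follows by dropping one marked point in the $2$-pointed elliptic Brill-Noether statement (\cite{MR3296194} Lemma~2.1) already invoked in the proofs of Theorems \ref{main1} and \ref{main4}.

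For the inductive step, I would specialize a general $C\in|gJ_0+f|$ within the sublinear system $J_0+|(g-1)J_0+f|$, which cuts out a hyperplane in $|gJ_0+f|$. On the blow-up $X=\mbox{Bl}_q(X')$, the strict transform of the corresponding degenerate curve is exactly the pointed stable curve
\[
\bigl[D\cup E\cup J_0,\ \tilde q\in E\bigr]\in\cc_g
\]
from \eqref{elltail3}, where $D\in|(g-1)J_0+f|$ is a general smooth curve of genus $g-1$, the exceptional divisor $E\cong\PP^1$ meets $D$ at a single point $q_D$ and $J_0$ at a single point $q_{J_0}\neq q_D$, and $\tilde q$ is a third distinct point on $E$. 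By the inductive hypothesis, $[D,q_D]$ is Brill-Noether general. Suppose for contradiction that $\dim G^r_d(C,\tilde q,\alpha)>\rho(g,r,d,\alpha)$ for some Schubert index $\alpha$. By semicontinuity, the scheme of refined limit $\mathfrak g^r_d$'s $\ell=(\ell_D,\ell_E,\ell_{J_0})$ on $D\cup E\cup J_0$ with ramification $\geq\alpha$ at $\tilde q$ then contains a component $Z$ with $\dim Z>\rho(g,r,d,\alpha)$.

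Stratifying $Z$ by the node vanishing sequences $\alpha^D:=\alpha^{\ell_D}(q_D)$ and $\alpha^{J_0}:=\alpha^{\ell_{J_0}}(q_{J_0})$, refinedness forces $\ell_E$ to have vanishing $(\alpha^D)^c$ at $q_D$ and $(\alpha^{J_0})^c$ at $q_{J_0}$, so $Z$ is birational to
\[
G^r_d(D,q_D,\alpha^D)\times G^r_d\bigl(\PP^1,(q_D,(\alpha^D)^c),(q_{J_0},(\alpha^{J_0})^c),(\tilde q,\alpha)\bigr)\times G^r_d(J_0,q_{J_0},\alpha^{J_0}).
\]
The inductive hypothesis gives the first factor dimension $\rho(g-1,r,d,\alpha^D)$; Brill-Noether generality of the $1$-pointed elliptic curve $[J_0,q_{J_0}]$ gives the third factor dimension $\rho(1,r,d,\alpha^{J_0})$; the middle factor is a Schubert intersection in $G\bigl(r+1,H^0(\PP^1,\OO(d))\bigr)$ defined by vanishing conditions at three distinct points, hence transverse with dimension $\rho(0,r,d,(\alpha^D)^c,(\alpha^{J_0})^c,\alpha)$. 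Additivity of the Brill-Noether number along the chain then forces $\dim Z=\rho(g,r,d,\alpha)$, the desired contradiction.

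The main obstacle I anticipate is the presence of the rational bridge $E$ in this three-component degeneration, which distinguishes the setup from the two-component chain used in Theorem \ref{main4}. However, the dual graph of $D\cup E\cup J_0$ is still a tree, so the Eisenbud-Harris theory of refined limit linear series \cite{MR846932} applies; the extra rational component only contributes a Schubert calculus problem on $\PP^1$ that is solvable in the expected dimension precisely because the three specialization points $q_D,q_{J_0},\tilde q$ are pairwise distinct --- the same mechanism that powered the elliptic argument in the proof of Theorem \ref{main1}.
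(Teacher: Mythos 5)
Your proposal is correct and follows essentially the same route as the paper: the same specialization to the three-component curve $D\cup E\cup J_0$ from \eqref{elltail3}, the same birational decomposition of the component $Z$ of limit linear series into a product over the three components, and the same additivity computation $\rho(g-1,r,d,\gamma_D)+\rho(0,r,d,\gamma_D^c,\gamma_{J_0}^c,\alpha)+\rho(1,r,d,\gamma_{J_0})=\rho(g,r,d,\alpha)$ yielding the contradiction. The only cosmetic difference is that you explicitly flag the distinctness of the three points on the rational bridge $E$ and the base case $g=1$, which the paper leaves implicit by appeal to the proofs of Theorems \ref{main1} and \ref{main4}.
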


\begin{proof}
The proof proceeds by induction on $g$ in a way mirroring the proofs of Theorems \ref{main1} and \ref{main4}. Assume by contradiction that the pointed Brill-Noether Theorem fails for every smooth curve $[C,q]$, where $C\in |gJ_0+f|$. By choosing a Lefschetz pencil $\iota$ in $|gJ_0+f|$ as above, the same conclusion holds for the degenerate pointed curve $t=[D\cup E\cup J_0, \widetilde{q}]$.
That is, the variety of limit linear series $\ell$ of type $\mathfrak{g}^r_d$ on $D\cup E\cup J_0$ such that
$a^\ell(\widetilde{q})\geq \alpha$
has a component $Z$ of dimension strictly greater than $\rho(g,r,d,\alpha)$, for some $r,d,$ and $\alpha$.
For $\ell=(\ell_D, \ell_E, \ell_{J_0})$  a general point of $Z$, let $\gamma_D:=\alpha^{\ell_D}(D\cdot E)$ and $\gamma_{J_0}:=\alpha^{\ell_{J_0}}(J_0 \cdot E)$.
Then $Z$ is birationally isomorphic to
\[
G^r_d\Bigl(D, (D\cdot E, \gamma_D)\Bigr) \times G^r_d\Bigl(E, (E\cdot D, \gamma_D^c), (E\cdot J_0, \gamma_{J_0}^c), (\widetilde{q}, \alpha) \Bigr) \times G^r_d\Bigl(J_0, (J_0\cdot E, \gamma_{J_0})\Bigr).
\]
Both the $3$-pointed rational curve $[E, E\cdot D, E\cdot J_0, \widetilde{q}]$, as well as the $1$-pointed elliptic curve $[J_0, J_0\cdot E]\in \cM_{1,1}$ verify the pointed Brill-Noether Theorem. By induction the same can be assumed for $[D, D\cdot E]\in \mathcal{C}_{g-1}$. 
It follows that
\[
\dim Z = \rho(g-1,r,d,\gamma_D) + \rho(0,r,d,\gamma_D^c,\gamma_{J_0}^c,\alpha) + \rho(1,r,d,\gamma_{J_0})= \rho(g,r,d,\alpha),
\]
a contradiction.
\end{proof}

\bibliographystyle{alphanum}
\bibliography{}

\end{document}